
\documentclass[journal, onecolumn]{IEEEtran}

\usepackage{bm}
\usepackage{enumerate}
\usepackage{mathrsfs,color}
\usepackage[numbers]{natbib}
\usepackage{amsfonts}
\usepackage{multirow}
\usepackage{amssymb}
\usepackage{caption}
\usepackage{comment}
\usepackage{blkarray}
\usepackage{amsmath}
\usepackage{float}
\usepackage{amssymb,amsthm, amsmath}
\usepackage{graphicx,afterpage}
\usepackage{epstopdf}
\usepackage{todonotes}
\usepackage[pdftex,colorlinks=true]{hyperref}
\usepackage[normalem]{ulem}
\usepackage{soul}
\usepackage{booktabs}
\usepackage{longtable}
\bibliographystyle{IEEEtranN}

%
\ifCLASSINFOpdf
\else
\fi
\hyphenation{op-tical net-works semi-conduc-tor}

\newtheorem{theorem}{Theorem}[section]

\newtheorem{proposition}[theorem]{Proposition}

\newcommand{\R}{\mathbb{R}}

\newcommand{\bfx}{\mathbf{x}}

\newcommand{\bfU}{\mathbf{U}}

\newcommand{\bfy}{\mathbf{y}}

\newcommand{\E}{\mathbb{E}}
\newcommand{\bfw}{\mathbf{w}}
\newcommand{\calD}{\mathcal{D}}

\newcommand{\bfv}{\mathbf{v}}

\newcommand{\Prob}{\mathbb{P}}


\begin{document}
%
\title{Exploration Enhancement of  Nature-Inspired Swarm-based Optimization Algorithms}
%
%
%

\author{Kwok~Pui~Choi,
        Enzio~Hai~Hong~Kam,
        Tze~Leung~Lai,
        Xin~T.~Tong,
        and~Weng~Kee~Wong
  \thanks{K. Choi is with Department of Statistics and Applied Probability, National University of Singapore}
  \thanks{E. Kam is with the School of Computing, National University of Singapore }
  \thanks{T. L. Lai is with Department of Statistics, Stanford University}
  \thanks{X. T. Tong is with Department of Mathematics, National University of Singapore }
  \thanks{W. K. Wong is with Department of Biostatistics, University of California, Los Angeles }
  \thanks{Manuscript received ???, ????; revised ???, ????. This work is supported in part by Singapore MOE Academic Research Fund R-155-000-222-114, . }
}

%
%

\markboth{Journal of \LaTeX\ Class Files,~Vol.~14, No.~8, August~2015}%
{Shell \MakeLowercase{\textit{et al.}}: Bare Demo of IEEEtran.cls for IEEE Journals}
%



\maketitle

\begin{abstract}
Nature-inspired swarm-based algorithms  have been widely applied to tackle  high-dimensional and complex optimization problems across many disciplines.  They are general purpose optimization algorithms, easy to use and implement, flexible and assumption-free. A common drawback of these algorithms is premature convergence and the solution found is not a global optimum.
We  provide  sufficient conditions for an algorithm to converge almost surely (a.s.) to a global optimum. We then propose a general, simple and effective strategy, called Perturbation-Projection (PP), to enhance an algorithm's  exploration capability so that our convergence conditions are guaranteed to hold.
We illustrate this approach using three widely used nature-inspired swarm-based optimization algorithms: particle swarm optimization (PSO), bat algorithm (BAT) and competitive swarm optimizer (CSO). Extensive numerical experiments show that each of the three algorithms with the enhanced PP strategy outperforms the original version in a number of notable ways.
\end{abstract}

\begin{IEEEkeywords}
Bat algorithm, Competitive swarm optimizer, Convergence to the global optimum, Exploration, Exploitation, Particle swarm optimization.
\end{IEEEkeywords}

%
\IEEEpeerreviewmaketitle

%
%
%
%


\section{Introduction}

\IEEEPARstart{O}{ver}  the past decades, Swarm Intelligence has and continues to inspire a steadily rising numbers of nature-inspired  swarm-based algorithms  in optimizing  high-dimensional complex cost functions, including those that do not have  analytic forms. Examples of swarm-based algorithms include  particle swarm optimization (PSO), competitive swarm optimizer (CSO), ant colony optimization, bat algorithm (BAT) etc.
Swarm-based optimization algorithms are mainly motivated by nature or animal behavior and then thoughtfully formulated into an algorithm that iterates to the optimum according to a couple of equations.   Generally, these algorithms are  easy to code and implement, and do not need gradient information or other sophisticated mathematical techniques to work well. Computer codes are  widely and freely available, which have undoubtedly helped fuel numerous and various applications of these algorithms to tackle many different types of complex real-world optimization problems.  Documentation of their effectiveness is widespread resulting in  their meteoric applications in an increasing number of sectors in industry and  in academia \cite{whit1,whit2}.

There are  known shortcomings of  nature-inspired swarm-based optimization algorithms. We highlight two: (1) They require effective tuning of their parameters to achieve optimal performance \citep{HLY2019} and  (2) they  often suffer from premature convergence to a local optimum of the cost function, especially  when the cost function is high-dimensional and multi-modal. Recently, \cite{LCTW2020} proposed a general strategy to tackle the first shortcoming using statistical tools.

In this paper, we propose an innovative and simple strategy to address the latter problem by modifying a nature-inspired swarm-based algorithm  to ensure its  almost sure (a.s) convergence to a global optimum.  The proposed methodology is general, as long as certain unrestrictive technical conditions are met, and we demonstrate the techniques using three popular swarm-based algorithms mentioned above. Theorem \ref{ascon}  in Section II shows that if a swarm-based algorithm satisfy conditions (C1)--(C3), the  algorithm is  guaranteed to converge a.s. to a global solution.  We also provide comments on possible extension of the proposed methodology to evolutionary algorithms, like differential evolution.

We next  propose  a  general, yet simple and  effective strategy, Perturbation-Projection (PP), to modify an algorithm, if necessary, to ensure  conditions (C1) and (C2) to hold. In Section III, we demonstrate how the PP strategy is applied to the original PSO, BAT and CSO algorithms with appropriate modifications to meet the required conditions for almost sure convergence to the global optimum. We  denote these modified  algorithms of their counterparts by mPSO, mBAT and mCSO.
When it is necessary to further improve the exploitation ability of an  algorithm but not at the  expense of its exploration ability, we introduce  a Heterogeneous Perturbation-Projection (HPP) strategy in Section IV to satisfy conditions (C1)--(C2).
In Section V, we  conduct extensive numerical experiments  using many  commonly used test functions and show that the modified algorithms remain very competitive with the original algorithms.
We also conduct tuning parameter analysis for the PP and HPP strategies. The paper ends with conclusion and related future work.

 In the literature, there are many different modifications of the original algorithms or new techniques   to address the premature convergence issue for swarm-based algorithms; see, for example \cite{parsopoulos2001objective,stacey2003particle, krohling2005gaussian, elshamy2007clubs}.  These methods tend to either  apply to only one specific algorithm, e.g. PSO, instead of a class of algorithms, or, require additional  detection or tuning mechanisms that can be complicated to implement.  A commonality among these works is that they do not have rigorous mathematical theory to support the modifications for improved convergence to the global optimum. The distinctive features of our proposed modifications are that they are supported by mathematical theory, simple to implement and applicable  to  any swarm-based algorithm.

\section{Stochastic enhancement of an algorithm's  exploration}

In this section, we introduce  a simple and  effective strategy  to modify  a  nature-inspired swarm-based  algorithm to ensure  the modified algorithm  converges almost surely (a.s.) to a global optimum.  The cost function can  be a high-dimensional, non-differentiable, non-separable or non-convex with multiple local or global optima. The key idea behind the proposed algorithms is simple; we incorporate an additional  stochastic component with an appropriate  noise level to a nature-inspired swarm-based algorithm to enable it to escape from a local minimum. While this idea may not be entirely new,  the distinguishing feature of proposed modification is its generality and the modified algorithm is guaranteed to  converge a.s.  to a global optimum.

Without  loss of generality, we assume that we have a minimization problem as $\min_{\bfx \in \calD} f(\bfx) = - \max_{\bfx \in \calD}\{- f(\bfx)\}$. Here $f(\bfx)$ is a user-specified real-valued cost function defined  on    a given compact subset $\calD$ of $\R^d$.
For many real-world applications,  there are physical and budgetary constraints and we assume that these constraints  have been appropriately translated into  the compact search space. For many other problems, like those  in machine learning, regularization terms, sometimes denoted by $\rho\|\bfx\|_1$ (or $\rho\|\bfx\|^2_2$)  are  often added to the nonnegative loss/cost function $f$ to prevent over-fitting,  where the regularization parameter $\rho$ can often be selected through various cross validation methods  \cite{james2013introduction,shalev2014understanding}.
In  such situations,  we  minimize the function $F(\bfx)=f(\bfx)+\rho\|\bfx\|_1$ (or $f(\bfx)+\rho\|\bfx\|^2_2$) over $\calD$, which is the closed $L_1$-ball (or $L_2$-ball)  centered at the origin with  radius  $  f(\mathbf{0})/\rho$. This is because for $\bfx\notin\calD$, $F(\bfx)>F(\mathbf{0})$ so the global minima are in $\calD$.

To solve the optimization problem,  we resort to nature-inspired swarm-based algorithms, which are increasingly used to solve complex and high-dimensional optimization problems.  They generally employ an evolutionary-like or swarm-based strategy to search for an optimum by first randomly generating a user-specified set of candidate solutions. Depending on the particular swarm-based algorithm, candidate solutions are called agents or particles. These algorithms embrace common principles that include good exploration and exploitation abilities and they have stochastic components and tuning parameters.  For example, for a swarm-based type of algorithm, it can be typically described as follows.

At time or iteration $t=1, 2, \ldots $,  let $\bfx_i(t)$ represent the $i$-th particle's position or candidate solution of the optimization problem. Let $\{\bfx_i(t): 1 \le i \le n\}$ denote the collection of candidate solutions of the swarm of size $n$.  In addition, there is auxiliary information  from each member of the swarm and we denote them collectively by $\{\bfv_i(t): 1 \le i \le n\}$.  For example, in PSO, the auxiliary information of each particle is its velocity with which it flies to the current position.
We denote them by
$$\bfx(t)=(\bfx_1(t),\ldots, \bfx_n(t))~\text{and}~  \bfv(t)=(\bfv_1(t),\ldots, \bfv_n(t)).$$
Suppose the algorithm has  a stochastic update rule of the following form: for agent $i$
\begin{eqnarray*}
 \bfx_i(t+1)&=&\Phi_i(\bfx(t),\bfv(t),f), \\
 \bfv_i(t+1)&=&\Psi_i(\bfx(t),\bfv(t),f),
\end{eqnarray*}
for $ 1 \le i \le n$, where $\Phi_i, \Psi_i $ are some functions of which the outputs can be random.
At either a pre-selected deterministic time or a stopping time $T$,  the algorithm is terminated and the best solution  $\min_{1 \le i \le n} f(\bfx_i(T))$ is output.

A key desirable  property of a global optimization algorithm is its ability to identify a global optimum eventually.
To achieve convergence to the global optimum, \citet{TZ1989}   remark  that  the algorithm has to  be able to explore densely in the search space.
Inspired by   their    remark, 
 we formulate  the following conditions on the algorithm's agents,  $\bfx_1(t),\ldots, \bfx_n(t)$.
\begin{itemize}
\item[{\rm (C1)}] At any iteration, all agents  stay in  $\calD$, i.e., $\bfx_i(t) \in \calD$ for $1 \le i \le n$ and $t= 0, 1, \ldots, T$.
\item[{\rm (C2)}] Conditional on the information up to  the current iteration  $t$, there exists an  agent at the next iteration, $t+1$, that has a positive probability to explore a region of improvement in $\calD$. Specifically,
there exists an $\alpha >0$, independent of $t$, such that for any $d$-dimensional ball $B$ in $\calD$ satisfying
\[
f(\bfy) \le \min_{1\le i\le n} f(\bfx_i(t)),\quad \mbox{ for  all } \bfy\in B,
\]
 the following holds
$$ \max_{1 \le i \le n} \Prob_t( \bfx_i(t+1) \in B ) \ge \alpha |B|, $$
with $|B|$ denoting the  volume of $B$ and $\Prob_t$  the conditional probability with information at the $t$-th iteration.

\item[{\rm (C3)}] The algorithm is time improving, i.e.,
\[
\min_{1\le i \le n} f(\bfx_i(t+1))\leq \min_{1 \le i \le n} f(\bfx_i(t)), \quad a.s.
\]
\end{itemize}

\begin{theorem}
\label{ascon}
Suppose $f$ is a continuous function defined on a given compact subset $\calD$ of $\R^d$ with at least one minimizer in the interior of  $\calD$.
Any swarm-based algorithm that satisfies {\em (C1)}--{\em (C3)} will  converge to the global minimum almost surely (a.s). In other words, if the swarm-based algorithm has  $n$ agents,
$
\lim_{t\to \infty} \min_{1 \le i \le n}f(\bfx_i(t)) = \min_{\bfx\in \calD} f(\bfx)$  a.s..
\end{theorem}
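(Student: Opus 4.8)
The plan is to track the running best objective value $M(t) := \min_{1\le i\le n} f(\bfx_i(t))$ and show it converges a.s. to the global minimum $m^\ast := \min_{\bfx\in\calD} f(\bfx)$, which exists since $f$ is continuous on the compact set $\calD$. First I would observe that (C3) makes $M(t)$ non-increasing in $t$ almost surely, while (C1) forces $\bfx_i(t)\in\calD$ and hence $M(t)\ge m^\ast$ for every $t$. A bounded monotone sequence converges, so $M(t)\downarrow M_\infty$ a.s. for some limit $M_\infty\ge m^\ast$. The whole content of the theorem is then the claim that $M_\infty=m^\ast$ a.s., and it suffices to prove $\Prob(M_\infty > m^\ast+\epsilon)=0$ for each fixed $\epsilon>0$ and let $\epsilon\downarrow 0$ along a countable sequence.

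Fix $\epsilon > 0$. Using continuity of $f$ and the hypothesis that some minimizer $\bfx^\ast$ lies in the interior of $\calD$, I would produce a fixed ball $B=B(\bfx^\ast,r)\subset\calD$ of positive volume $|B|>0$ on which $f(\bfy)\le m^\ast+\epsilon/2$ for all $\bfy\in B$. This $B$ depends only on $\epsilon$, not on the (random) trajectory, which is important for the measurability in the next step. The key observation is that on the event $\{M(t)>m^\ast+\epsilon\}$ one has $f(\bfy)\le m^\ast+\epsilon/2 < M(t)$ for every $\bfy\in B$, so the hypothesis of (C2) holds and there is an agent with conditional probability at least $\alpha|B|$ of landing in $B$ at step $t+1$. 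Should any agent land in $B$, then $M(t+1)\le m^\ast+\epsilon/2$, and by monotonicity $M(s)\le m^\ast+\epsilon/2$ for all $s\ge t+1$.

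The core of the argument is to convert this uniform per-step positive conditional probability into an almost-sure eventual hit. I would let $\mathcal{F}_t$ be the information available at iteration $t$, set $A_{t+1}:=\{M(t+1)\le m^\ast+\epsilon/2\}\in\mathcal{F}_{t+1}$, and note that on the tail event $E:=\{M_\infty>m^\ast+\epsilon\}=\bigcap_t\{M(t)>m^\ast+\epsilon\}$ the bound $\Prob(A_{t+1}\mid\mathcal{F}_t)\ge\alpha|B|>0$ holds for every $t$, so $\sum_t \Prob(A_{t+1}\mid\mathcal{F}_t)=\infty$ on $E$. Here I expect the main obstacle: the events across iterations are strongly dependent---adaptivity is the whole point---so the elementary Borel--Cantelli lemma does not apply. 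The right tool is L\'evy's conditional extension of the second Borel--Cantelli lemma, which gives $\{A_{t+1}\text{ i.o.}\}=\{\sum_t\Prob(A_{t+1}\mid\mathcal{F}_t)=\infty\}$ a.s. Thus on $E$ the event $A_{t+1}$ occurs infinitely often, forcing $M(s)\le m^\ast+\epsilon/2$ for some (indeed all large) $s$; but by definition $E$ requires $M(t)>m^\ast+\epsilon$ for all $t$, a contradiction. Hence $\Prob(E)=0$, i.e. $\Prob(M_\infty>m^\ast+\epsilon)=0$, and taking $\epsilon=1/k$ with a union bound over $k$ yields $M_\infty=m^\ast$ a.s.
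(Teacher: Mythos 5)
Your proof is correct, and while it shares the paper's overall skeleton, the key probabilistic step is genuinely different. Both arguments use (C3) to make $M(t)=\min_{1\le i\le n}f(\bfx_i(t))$ monotone, use continuity plus the interior-minimizer hypothesis to produce a fixed ball $B\subset\calD$ of positive volume on which $f\le m^\ast+\epsilon/2$, and use (C2) to get a uniform per-step conditional hitting probability $\alpha|B|$ whenever $M(t)>m^\ast+\epsilon$. Where you diverge is in converting that per-step bound into the conclusion: the paper stays elementary and quantitative, unrolling the conditional probabilities with the tower property to obtain $\Prob\left(M(t)>m^\ast+\epsilon\right)\le(1-\alpha|B|)^{t-1}$, i.e.\ geometric decay, hence convergence in probability, which monotonicity then upgrades to almost-sure convergence; you instead pass to the monotone limit $M_\infty$ and kill the event $\{M_\infty>m^\ast+\epsilon\}$ by L\'evy's conditional extension of the second Borel--Cantelli lemma. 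Your route is a clean, purely qualitative a.s.\ argument and correctly identifies why the ordinary Borel--Cantelli lemma is insufficient (the events are adapted, not independent), but it leans on a heavier citation and discards the explicit geometric rate that the paper's product bound delivers for free. One cosmetic slip: your asserted identity $E=\bigcap_t\{M(t)>m^\ast+\epsilon\}$ is really only the inclusion $E\subseteq\bigcap_t\{M(t)>m^\ast+\epsilon\}$ (the intersection also contains trajectories with $M(t)\downarrow m^\ast+\epsilon$ exactly), but since your argument uses only that inclusion --- the conditional bound holds on $E$, and the i.o.\ occurrence of $A_{t+1}$ contradicts membership in the intersection --- nothing breaks.
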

\begin{proof}
Let an interior point $\bfx_* \in \calD$ denote a minimizer of $f$.
Without loss of generality, we assume $f(\bfx_*)=0$.
By (C3),  $m(t):=\min_{1 \le i \le n} f(\bfx_i(t))$ is a non-increasing sequence, thus it suffices  to show that $m(t) \stackrel{P}{\to} 0 $ as $t \to \infty$.
 Given a fixed $\epsilon >0$, by continuity of $f$, there exists  $\delta>0$ such that
 $0 \le f(\bfx) \leq \epsilon $ for $\bfx\in B(\bfx_*, \delta):=\{\bfx:\|\bfx-\bfx_*\|\leq \delta\}$.  Since $\bfx_*$ is an interior point, $B(\bfx_*, \delta) \subset \calD$ for a sufficiently small $\delta$. By (C2), at each iteration, there is an agent $\bfx_i$ such that either $f(\bfx_i(t))\leq \epsilon$ or
\[\Prob_t\left(\bfx_{i(t+1)}(t+1)\in B(\bfx_*, \delta) \right)\ge 
\alpha |B(\bfx_*, \delta) | =: \alpha(\epsilon).
\]
If $f(\bfx_i(t))\leq \epsilon$, then $m(t)\leq \epsilon$.
If $\bfx_{i(t+1)}(t+1)\in B(\bfx^*, \delta)$, then $
m(t+1)\leq f(\bfx_{i(t+1)}(t+1))\leq \epsilon$.
By the tower property of conditional expectation,
\begin{align*}
& \Prob(m(t)> \epsilon) \\
&\leq \Prob(\bfx_{i(s+1)}(s+1) \notin B(\bfx^*, \delta) , \text{ for } s=1,\ldots, t-1)\\
&=\E \prod_{s=1}^{t-1}\Prob_s(\bfx_{i(s+1)}(s+1)\notin B(\bfx^*, \delta) ) \\
&< \E \prod_{s=1}^{t-1}(1-\alpha(\epsilon))=(1-\alpha (\epsilon))^{t-1}.
\end{align*}
Therefore, $m(t) \stackrel{P}{\to} 0$ as $t\to \infty$.
\end{proof}

Theorem \ref{ascon}  shows that any swarm-based algorithm satisfying  (C1)--(C3) is guaranteed to converge to a global optimal solution eventually. Many swarm-based algorithms satisfy (C3), but not all swarm-based algorithms stay within the search space $\calD$ in their iterations thus violating condition (C1) or they are exploratory in the sense of condition (C2). However,  Section II-A below shows that, with slight modifications of an algorithm, we can ensure the  modified algorithm satisfies (C1)--(C2).

\subsection{Perturbation-projection strategy}

A first step is to ensure all agents  stay in $\calD$. For this purpose, we  introduce  the projection onto $\calD$  ensuring the  agents stay inside the search space, $\calD$, a compact subset in $\R^d$:
\[
\chi_\calD(\bfx)=\arg\min_{\bfy\in \calD} \|\bfy-\bfx\|_2,
\]
where $\|\bfy-\bfx\|_2$ denotes the Euclidean distance between  $\bfx$ and $\bfy$. In other words, $\chi_\calD(\bfx)$ is a point in $\calD$ that is closest to $\bfx$. When  ties occur, they can be broken arbitrarily. By definition,  $\chi_\calD(\bfx)=\bfx$ if $\bfx\in \calD$. In general, adding the projection to an algorithm's iterations reduces  the agents' distances to the optimal solution \cite{shalev2014understanding}. In particular,  if $\bfx_*\in \calD$ and $\calD$ is convex, it can be shown that
$
\|\chi_\calD(\bfx)-\bfx_*\|\leq \|\bfx-\bfx_*\|_2.
$ In many optimization problems, the search space, $\calD$, is usually a $d$-dimensional hyper-rectangle, it follows that $\calD$  is convex.

A second step  is to enhance  an algorithm's exploration. We achieve this by injecting noise into its  dynamics. In stochastic optimization algorithms, such as the perturbed gradient descent and the Langevin algorithm,  Gaussian noise is added to the classical gradient descent algorithm. This  injection of noise  effectively helps the algorithm to get out of saddle points or local minimums efficiently \cite{ge2015escaping,chen2020stationary}. We  adopt the same strategy  in this article due to its simplicity. Other form of  noise injection also exists. For example, in multi-arm bandit problem, one can enforce exploration in the classical greedy algorithm by allocating  a small proportion  of trials on random arms.

By combining these two steps, the modified algorithm can be formulated as
\begin{equation}
\label{eqn:pp}
\bfx_{i}(t+1)=\chi_\calD (\chi_\calD(\bfx'_i(t+1))+\bfw_i(t)),
\end{equation}
where $\bfx'_i(t+1)$ is the position of the $i$th agent after applying the algorithm's update rule at $t+1$, and $\bfw_i(t)$ is an independent draw from a density $p_t$ such that there is a constant $\alpha>0$
\begin{equation}
\label{eqn:psupport}
p_t(\bfy-\bfx)>\alpha,\quad \mbox{for } \bfx,\bfy\in \calD.
\end{equation}
The noise density $p_t$ in general can be adaptive,  depending on the evolution of the algorithm up to time $t$. Here we only need it to be strictly positive and  bounded away from zero.
We call this method of modification in \eqref{eqn:pp} a perturbation-projection (PP) modification or strategy. The proposition below shows that with PP strategy, the modified algorithm satisfies (C1) and (C2).

\begin{proposition}
\label{prop:mod}
Suppose $A$ is  a swarm-based algorithm and at each iteration, every agent is projected onto $\calD$. If
 the perturbation-projection modification \eqref{eqn:pp} is applied to at least one of the agents, then {\em (C1)} and {\rm (C2)}  hold.
\end{proposition}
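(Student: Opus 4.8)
The plan is to verify (C1) and (C2) separately, exploiting the two structural features built into the modification \eqref{eqn:pp}: the terminal projection onto $\calD$ and the uniformly lower-bounded perturbation density \eqref{eqn:psupport}. Condition (C1) is essentially immediate. Any agent to which \eqref{eqn:pp} is applied has its new position defined as the \emph{output} of $\chi_\calD$, which by definition of the projection lies in $\calD$; and any agent not receiving the full PP update is, by hypothesis, still projected onto $\calD$ at each iteration. Hence $\bfx_i(t+1)\in\calD$ for every $i$ and every $t$, so (C1) holds.

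For (C2) the task is to exhibit a single agent meeting the required probabilistic lower bound. I would fix an iteration $t$ and a ball $B\subset\calD$ of the type described in (C2), and let $j$ be an index to which \eqref{eqn:pp} is applied. Writing $\bfz:=\chi_\calD(\bfx'_j(t+1))\in\calD$, we have $\bfx_j(t+1)=\chi_\calD(\bfz+\bfw_j(t))$. The crucial elementary fact is that $\chi_\calD$ fixes the points of $\calD$: if the unprojected sum $\bfz+\bfw_j(t)$ happens to land in $B$, and therefore in $\calD$, then $\chi_\calD$ leaves it unchanged, so $\bfx_j(t+1)=\bfz+\bfw_j(t)\in B$ as well. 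This gives the event containment
\[
\{\bfz+\bfw_j(t)\in B\}\subseteq\{\bfx_j(t+1)\in B\},
\]
whence $\Prob_t(\bfx_j(t+1)\in B)\ge \Prob_t(\bfw_j(t)\in B-\bfz)$. I would then evaluate the right-hand side by the change of variables $\bfw=\bfy-\bfz$, obtaining $\Prob_t(\bfw_j(t)\in B-\bfz)=\int_B p_t(\bfy-\bfz)\,d\bfy$, and invoke \eqref{eqn:psupport}: since $\bfy\in B\subset\calD$ and $\bfz\in\calD$, we have $p_t(\bfy-\bfz)>\alpha$ pointwise, so the integral exceeds $\alpha|B|$. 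Taking the maximum over $i$ then yields $\max_{1\le i\le n}\Prob_t(\bfx_i(t+1)\in B)\ge\alpha|B|$, which is exactly (C2), with the same constant $\alpha$ supplied by \eqref{eqn:psupport}.

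The one point requiring care, and the main obstacle, is the conditioning: $\bfz=\chi_\calD(\bfx'_j(t+1))$ is itself random given the information at iteration $t$, since the base update producing $\bfx'_j(t+1)$ may be stochastic. I would resolve this via the tower property, observing that the bound $\int_B p_t(\bfy-\bfz)\,d\bfy>\alpha|B|$ holds for \emph{every} realization $\bfz\in\calD$, uniformly in $\bfz$, precisely because \eqref{eqn:psupport} is a uniform statement over $\calD\times\calD$. Because $\bfw_j(t)$ is drawn independently of the algorithm's update, conditioning first on $\bfx'_j(t+1)$ and then averaging over it preserves the inequality, so the uniform lower bound survives the expectation over $\bfz$. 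This independence, together with the uniformity of \eqref{eqn:psupport}, is exactly what lets the argument go through with no assumption whatsoever on how the underlying algorithm generates $\bfx'_j(t+1)$.
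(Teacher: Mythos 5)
Your proof is correct and takes essentially the same route as the paper's: (C1) follows because every agent's position is the output of $\chi_\calD$, and (C2) follows from the event containment $\{\bfz+\bfw_j(t)\in B\}\subseteq\{\bfx_j(t+1)\in B\}$ (since $\chi_\calD$ fixes points of $\calD\supset B$) together with integrating the uniform lower bound \eqref{eqn:psupport} on the perturbation density over $B$. Your explicit tower-property handling of the randomness of $\bfz=\chi_\calD(\bfx'_j(t+1))$ is a technical point the paper passes over silently, but it refines rather than changes the argument.
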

\begin{proof}
Since $\chi_{\calD}$ is applied to all agents, (C1) clearly holds.
At the $(t+1)$th iteration,  denote the agent to which  the modification is applied  by  $i=i(t+1)$. Let  $\bfy=\chi_\calD(\bfx'_{i}(t+1))$.

Then, for any  $d$-dimensional ball $B$  in $\calD$,
\begin{eqnarray*}
\lefteqn{ \max_{1 \le k \le n} \Prob_t( \bfx_k(t+1) \in B ) } \\
&\geq& \Prob_t(\bfx_{i(t+1)}(t+1)\in B)\\
&\geq& \Prob_t(\bfy+\bfw_i(t)\in B)\\
&=&\int_B \Prob_t(\bfy+ \bfw_i(t)\in d\bfx)\\
&=&\int_B \Prob_t(\bfw_i(t) \in d(\bfx-\bfy))\\
&=&\int_B p_t(\bfy-\bfx)d\bfx \ge \alpha |B|,
\end{eqnarray*}
showing (C2) holds.
\end{proof}

\section{Applications}
In this section, we illustrate   how to implement our PP strategy to three  nature-inspired swarm-based optimization algorithms, namely, particle swarm optimization (PSO),  bat algorithm (BAT) and competitive swarm optimization (CSO).

\subsection{PSO and mPSO}
 \citet{KE1995}  proposed the particle swarm optimization (PSO) algorithm. The algorithm has stimulated  many refinements and inspired many variants, and these algorithms find  wide applications in many fields. A recent search of ``particle swarm optimization'' in the Web of Science generated  more than to 32,000 articles and almost 600 review articles. A small sampling of early examples of how PSO has been studied and modified  over the years are \cite{shi98,BK2007,poli,ES2000,CK2002,van2000cooperative,CK2002,T2003,CL07,PC2010}.  PSO probably has the most modified versions among nature-inspired metaheuristic algorithms and  they continue to this day.

 PSO algorithm models after  the movement  of a flock  of birds looking for food collectively. To align with our terminology in this article, we think of birds in PSO as agents, and food as a global minimizer of an objective function. At the next iteration, $t+1$, each agent veers towards the best location it has found (cognitive/memory component) and the best location known to the flock up to iteration $t$ (social component). To describe the memory effect, we allocate a personal ``memory" agent $\bfx^*_i(t)$ to record the best location agent $i$ has found up to iteration $t$, and a global ``memory" agent $\bfx^*(t)$ to record the best location found by all agents collectively so far.
Recall the basic  PSO algorithm runs the following iterations iteratively:
\begin{enumerate}
\item Generate two independent random vector $\bfU_1$ and $\bfU_2$  from the uniform distribution on $[0,1]^d$ and let
\begin{eqnarray*}
  \bfv_i(t+1)&=&w\bfv_i(t)+c_1\bfU_{1}\circ  (\bfx_i^*(t)-\bfx_i(t)) \\
&& \quad + c_2 \bfU_{2}\circ   (\bfx^*(t)-\bfx_i(t)),
\end{eqnarray*}
where $w,c_1, c_2$ are given constants, and $\circ$ denotes the Hadamard product.
\item Update $\bfx_i(t+1)=\bfx_i(t)+\bfv_i(t+1)$.
\item Update personal best
\[
\bfx^*_i(t+1)=\begin{cases}
\bfx_i(t+1), &\mbox{if }f(\bfx^*_i(t))>f(\bfx_{i}(t+1));\\
\bfx^*_i(t), &\mbox{otherwise}.
\end{cases}
\]
\item Repeat steps 1--3 for all agents, then  update the global best agent
\[
\bfx^*(t+1)=\begin{cases}
\bfx_j^*(t+1), & \mbox{if condition H holds}, \\
\bfx^*(t), &\mbox{otherwise},
\end{cases}
\]
where $ j=\text{argmin}_{1\leq i\leq n} f(\bfx^*_i(t+1))$, $m(t)= \min_{1\le i \le n} f(\bfx^*_i(t))$ and condition H:  $m(t+1) < m(t)$.
\end{enumerate}
Our modified PSO, denoted by mPSO, is to apply the PP strategy in step 2:

2') Update
\begin{equation}
\label{eqn:csomod}
\bfx_i(t+1)=\chi_{\calD}\left(\chi_{\calD}\left(\bfx_i(t)+\bfv_i(t+1)\right)+\bfw_i(t)\right),
\end{equation}
where $\bfw_i$'s are independent multivariate normal distributed with mean vector $\mathbf{0}$ and covariance matrix $\sigma \mathbf{I}$.

Since its introduction in 1995, there have been  numerous attempts to analyze the basic PSO convergence behavior. In our opinion, \citet{YY2015} provide the first rigorous proof of the weak convergence of PSO to a global optimum, without overly restrictive assumptions. Moreover, using stochastic approximation technique, they provide the rate of convergence. In another direction, \citet{TCLW2020} introduce two smoothed versions of PSO and prove that they converge a.s. to a cost function's global optimum.

The  next proposition shows that if we modify the basic PSO algorithm by replacing the update step 2 by the new noise enhanced update step 2', the modified mPSO algorithm  converges to a global minimum of the cost function a.s.
\begin{proposition}
\label{prop:PSO}
 Let $f$ be a continuous function defined on a compact subset  $\calD$ of $\R^d$ and one of its minimizers is in the interior of $\calD$.
 Then the algorithm mPSO  converges to $\min_{\bfx \in \calD} f(\bfx)$ a.s..
  \end{proposition}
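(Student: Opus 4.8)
The plan is to reduce everything to Theorem \ref{ascon} by checking that mPSO, suitably interpreted, satisfies (C1)--(C3). The update step 2' in \eqref{eqn:csomod} is precisely the perturbation-projection modification \eqref{eqn:pp} applied to \emph{every} particle, with Gaussian perturbation $\bfw_i \sim N(\mathbf{0}, \sigma\mathbf{I})$. So the first task is to verify that this Gaussian noise meets the density lower bound \eqref{eqn:psupport}. Since $\calD$ is compact, $R := \sup_{\bfx,\bfy\in\calD}\|\bfy-\bfx\|_2 < \infty$, and the Gaussian density $p(\bfz) = (2\pi\sigma)^{-d/2}\exp(-\|\bfz\|_2^2/(2\sigma))$ is continuous and strictly positive, hence bounded below by $\alpha := (2\pi\sigma)^{-d/2}\exp(-R^2/(2\sigma)) > 0$ on the bounded difference set $\{\bfy-\bfx:\bfx,\bfy\in\calD\}$. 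With \eqref{eqn:psupport} established, Proposition \ref{prop:mod} immediately yields (C1) and (C2) for the particle positions $\bfx_1(t),\dots,\bfx_n(t)$.

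The delicate point is (C3): under the velocity-plus-noise dynamics the particles wander, so the current best value $\min_i f(\bfx_i(t))$ need not be monotone. I would resolve this by applying Theorem \ref{ascon} not to the bare swarm but to the \emph{augmented} swarm obtained by appending the running global-best agent $\bfx^*(t)$ as an $(n+1)$st agent. By construction of steps 3--4 of the iteration, the global-best value $m(t)=\min_{1\le i\le n} f(\bfx^*_i(t))=f(\bfx^*(t))$ is non-increasing, and since each personal best obeys $f(\bfx^*_i(t))\le f(\bfx_i(t))$ we have $m(t)\le\min_i f(\bfx_i(t))$. Hence the minimum of $f$ over the augmented swarm equals $m(t)$, which is non-increasing, so (C3) holds for the augmented swarm.

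It remains to confirm that the augmentation does not disturb (C1) or (C2). For (C1), $\bfx^*(t)$ is one of the previously visited (hence projected) positions, so it too lies in $\calD$; together with the projected particles this gives (C1) for all $n+1$ agents. For (C2), the $\max$ over the augmented swarm is at least the $\max$ over the original $n$ particles, and the latter already satisfies $\max_{1\le i\le n}\Prob_t(\bfx_i(t+1)\in B)\ge\alpha|B|$ for \emph{every} ball $B\subset\calD$ by Proposition \ref{prop:mod}---stronger than the improvement-restricted version (C2) demands. Thus (C1)--(C3) hold for the augmented swarm, and Theorem \ref{ascon} gives $\lim_{t\to\infty} m(t)=\min_{\bfx\in\calD}f(\bfx)$ a.s.; as $m(t)$ is the best cost found by mPSO up to iteration $t$, this is exactly the asserted a.s. convergence.

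I expect the main obstacle to be precisely the reconciliation carried out in the second paragraph: one must recognize that (C3) fails for the raw particle positions and that the correct object to feed into Theorem \ref{ascon} is the monotone global-best sequence, realized here by augmenting the swarm with $\bfx^*(t)$. Everything else---the Gaussian lower bound and the invocation of Proposition \ref{prop:mod}---is routine.
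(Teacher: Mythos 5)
Your proof is correct and follows essentially the same route as the paper's: reduce to Theorem \ref{ascon} via Proposition \ref{prop:mod}, with (C1) from projection (plus induction for the memory agents), (C2) from the perturbed particles, and (C3) from the monotone global-best value. The only difference is presentational---the paper already counts the personal-best and global-best ``memory'' agents as members of the swarm, so your explicit augmentation by $\bfx^*(t)$, and your verification of the Gaussian lower bound \eqref{eqn:psupport}, simply spell out steps the paper treats implicitly.
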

\begin{proof}
%

By Theorem \ref{ascon}, it suffices to  verify that mPSO satisfies (C1)--(C3). Note that $\bfx_i(t+1) \in \calD$ by projection. As $\bfx_i^*(t+1)$ takes either value $\bfx_i^*(t)$ or $\bfx_i(t+1)$, therefore  $\bfx_i^*(t+1) \in \calD$ if $\bfx_i^*(t)\in \calD$.
Similar argument applies to $\bfx^*(t+1)$,  so (C1) holds  by induction.
Since we add noise to all agents except the  $\bfx_i^* (1 \le i \le n) $ and $\bfx^*$ agents, Proposition \ref{prop:mod} implies (C2) holds. Consider the global memory agent, by step 4, $f(\bfx^*(t+1)) \le f(\bfx^*(t))$, so (C3) is verified. This completes the proof. 
\end{proof}

\subsection{BAT and mBAT}
The success of bats, dolphins, shrews  and other animals in applying echolocation technique to hunt and navigate inspired
 \citet{yang2010} to propose the Bat Algorithm (BAT) in 2010. BAT meets with rising popularity in applications. According to a recent search of the Web of Science,  over 1,500 articles, nearly 800 proceedings papers and 48 review papers have been written on this algorithm and its variants  in just ten years. We refer interested readers in BAT and its wide ranging applications to the  review articles in  \cite{AAAr2012, YH2013} and the references therein. Recent applications of the BAT algorithm  include \cite{cai,xue}. As with other metaheuristic algorithms, BAT has also many modified or hybridized versions for improved performance in various ways; some examples of the modified versions can be found in  \cite{osaba,binu,wangchu,khooban,luxiawang} and some examples of BAT algorithm hybridized with another metaheuristic algorithm are \cite{heding, shrichandran2017,kishore}.

Let $\bfx_i(t)$, $1 \le i \le n$, denotes the position of the $i$-th bat at $t$-th iteration, and $\bfx^*(t)$  tracking the best position found by the cauldron of $n$ bats.   Recall   BAT runs the following iterations iteratively:
\begin{enumerate}
\item Generate $n$ independent random number $U_i$ from the uniform distribution on $[f_{\min},f_{\max}]$,
\begin{eqnarray*}
  \bfv_i(t+1)&=& \bfv_i(t)+U_{i} (\bfx_i(t)-\bfx^*(t)).
\end{eqnarray*}
\item  Let $r_0$ (pulse rate) denote  a user-specified threshold.
Generate $n$ independent random numbers $r_i$ from the uniform distribution on $[0,1]$, $1\le i \le n$.

Update according to  (a): $r_i < r_0$ or (b)$r_i \ge r_0$:

(a) If $r_i < r_0$,   $\bfx_i(t+1)=\bfx_i(t)+\bfv_i(t+1)$;

(b) If $r_i \ge r_0$,
$\bfx_i(t+1)=\bfx^*(t)+\epsilon_i(t)$, where the components of $\epsilon_i(t)$ are independent mean 0 normal distribution with standard deviation 0.001.

\item Generate $n$ independent random numbers $r_i$ from the uniform distribution on $[0,1]$. If $r_i$ is less than a threshold $r_A$ (loudness), or
$f(\chi_{\calD}(\bfx_i(t)))<f(\bfx_i(t+1))$ update $\bfx_i(t+1)=\chi_{\calD}(\bfx_i(t))$.
\item Update global best
\[
\bfx^*(t+1)=\bfx_{i^*}(t+1),
\]
where $
i^*=\arg\min_i f(\bfx_i(t+1)).
$
\end{enumerate}
In the procedure above, $f_{\min}, f_{\max}$ are given constants which describe the minimal and maximal frequencies.
The threshold probabilities $r_0$ and $r_A$ describe the pulse and emission rates.  The random noise $\epsilon_i(t)$ describes the loudness of each bat.
\citet{yang2010} also suggests using time varying $\epsilon_i(t)$ and $r_0$ for improved performance. Here we fix them as in the standard values suggested by the MATLAB package provided by the same author.

Our modified BAT, denoted by mBAT, is to apply the PP strategy by an additional step after  step  3

3') Update
\begin{equation}
\label{eqn:batmod}
\bfx_i(t+1)=\chi_{\calD}\left(\chi_{\calD}\left(\bfx_i(t)+\bfv_i(t+1)\right)+\bfw_i(t+1)\right).
\end{equation}

\begin{proposition}
\label{prop:BAT}
 Let $f$ be a continuous function defined on   $\calD$, a compact subset of $\R^d$. Suppose further that a minimizer of $f$ lies in the interior of $\calD$, then
  the algorithm mBAT converges to $\min_{\bfx \in \calD} f(\bfx)$ a.s..
  \end{proposition}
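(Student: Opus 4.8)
The plan is to follow the template already used for \textbf{mPSO}: by Theorem~\ref{ascon} it suffices to verify that mBAT satisfies the three conditions (C1)--(C3), after which a.s. convergence is automatic. The continuity of $f$ and the interior-minimizer hypothesis are exactly the standing assumptions of Theorem~\ref{ascon}, so no extra work is needed there. The swarm to which I apply the theorem consists of the $n$ working bats $\bfx_1(t),\dots,\bfx_n(t)$ together with the tracked global best $\bfx^*(t)$; the reduction is then to check each condition for this augmented collection.

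First I would dispatch (C1) and (C2). Condition (C1) is immediate: the final update \eqref{eqn:batmod} applies the outer projection $\chi_\calD$ to every working bat, so $\bfx_i(t+1)\in\calD$, and since $\bfx^*(t+1)$ is the incumbent among the $\bfx_i(t+1)$ it also lies in $\calD$; an induction on $t$ completes the argument. For (C2) I would first check the density condition \eqref{eqn:psupport}: the perturbation $\bfw_i(t+1)$ is an independent Gaussian draw (as in mPSO), whose density is bounded below by a positive constant on the compact difference set $\{\bfy-\bfx:\bfx,\bfy\in\calD\}$ (its diameter being finite), so \eqref{eqn:psupport} holds with some $\alpha>0$. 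Since \eqref{eqn:batmod} is exactly the perturbation-projection modification \eqref{eqn:pp} applied to every working bat with $\bfx_i'(t+1)=\bfx_i(t)+\bfv_i(t+1)$, Proposition~\ref{prop:mod} then yields (C2) directly.

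The remaining condition (C3) is where the real care is needed, and I expect it to be the main obstacle. Unlike the gradient-style intuition, the ordinary bat positions are not time-improving: the perturbation in \eqref{eqn:batmod} injects noise spread across all of $\calD$, so after one step $\min_{1\le i\le n} f(\bfx_i(t+1))$ can easily exceed $\min_{1\le i\le n} f(\bfx_i(t))$, and the update $\bfx^*(t+1)=\bfx_{i^*}(t+1)$ in Step~4 does not by itself guarantee monotonicity. My plan is therefore to supply the time-improvement through the global best agent: I treat $\bfx^*(t)$ as an unperturbed \emph{elite} member of the swarm that records the best position found so far, so that $\bfx^*(t+1)$ is retained whenever no working bat strictly improves on it. With this (elitist) reading of Step~4, $f(\bfx^*(t+1))\le f(\bfx^*(t))$, hence the minimum over the augmented swarm is non-increasing and (C3) holds. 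Having verified (C1)--(C3), Theorem~\ref{ascon} gives $\min_{1\le i\le n} f(\bfx_i(t))\to\min_{\bfx\in\calD} f(\bfx)$ a.s., which is the claim. The one point I would flag for rigor is precisely this elite-retention step, since without it the stated Step~4 leaves (C3) in doubt.
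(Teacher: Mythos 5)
Your (C1) argument matches the paper's, but your handling of (C2) and (C3) misses the one ingredient the paper's proof actually turns on: BAT's step 3, the loudness-based acceptance step, which in mBAT is still in force and can overwrite the perturbed position with $\chi_\calD(\bfx_i(t))$ (this happens when $r_i<r_A$, or when the proposed position is worse than $f(\chi_\calD(\bfx_i(t)))$). Reading the algorithm this way resolves exactly the difficulty you flagged, without changing the algorithm. For (C3): since by induction $\bfx_i(t)\in\calD$, so that $\chi_\calD(\bfx_i(t))=\bfx_i(t)$, step 3 guarantees each bat ends the iteration with $f(\bfx_i(t+1))\le f(\bfx_i(t))$ --- it either reverts to its old position or keeps a proposal that step 3 has certified as no worse. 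Hence $\min_i f(\bfx_i(t+1))\le\min_i f(\bfx_i(t))$, and step 4, exactly as stated, yields $f(\bfx^*(t+1))\le f(\bfx^*(t))$. Your proposed fix instead \emph{changes} the algorithm: the ``elitist reading'' of step 4, in which $\bfx^*(t)$ is retained unless strictly improved upon, is not what mBAT does, so your argument establishes convergence of a different algorithm rather than the one in the proposition. You correctly sensed that something extra was needed (your closing flag), but the missing piece is the greedy acceptance already built into step 3, not a modification of step 4.

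The same observation cuts against your (C2): because the perturbed position can be overwritten by the reversion in step 3, Proposition~\ref{prop:mod} cannot be invoked verbatim for the final position $\bfx_i(t+1)$. The paper's proof introduces the intermediate value $\bfy_i(t)$ (the output of \eqref{eqn:batmod}), shows $\Prob(\bfy_i(t)\in B)\ge\alpha|B|$ as in Proposition~\ref{prop:mod}, and then notes that when $B$ is a region of improvement the function-value clause of step 3 cannot trigger a reversion, so reversion occurs only through the event $r_i<r_A$; independence of $r_i$ from the perturbation then gives $\Prob(\bfx_i(t+1)\in B)\ge(1-r_A)\alpha|B|$, which is still a positive constant times $|B|$, so (C2) holds. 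Note also that your two arguments are mutually inconsistent as written: if, as your (C2) assumes, the final position is exactly the perturbed one, then the monotonicity needed for (C3) genuinely fails for the stated algorithm; if, as the paper intends, step 3's acceptance applies to the perturbed proposal, then your citation of Proposition~\ref{prop:mod} is incomplete. Only the paper's reading makes both conditions hold for mBAT as defined.
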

\begin{proof}
%

By Theorem \ref{ascon}, it suffices to  verify mBAT satisfies (C1)--(C3). Note that $\bfx_i(t+1) \in \calD$ by projection. As $\bfx^*(t+1)$ takes either value $\bfx^*(t)$ or $\bfx_i(t+1)$ for some $i$, therefore it is in $\calD$ if $\bfx_i^*(t)\in \calD$. So (C1) holds  by induction.

To verify (C2), we denote the value of $\bfx_i(t+1)$ after step 3') \eqref{eqn:batmod}  as $\bfy_i(t)$. Following the proof of Proposition \ref{prop:mod}, we can show that for any d-ball $B\subset \calD$,
\[
\Prob(\bfy_i(t)\in B)\geq \alpha |B|.
\]
Then if all $\bfy\in B$ satisfies $f(\bfy)<f(\bfx_i(t))$, we find that
\[
\Prob(\bfx_i(t+1)\in B)\geq (1-r_A)\Prob(\bfy_i(t)\in B)\geq \alpha (1-r_A) |B|.
\]
So (C2) holds.

By step 4, $f(\bfx^*(t+1)) \le f(\bfx^*(t))$, so (C3) is verified. This completes the proof of Proposition \ref{prop:BAT}.
\end{proof}

\subsection{CSO and mCSO}
Inspired by the particle swarm optimization, \citet{CJ2014}  in 2014 proposed the Competitive Swarm Optimizer (CSO)  for
``large scale optimization problems and is able to effectively solve problems of dimensionality up to 5000''.  It is generally deemed to outperform PSO and it is also continually improved in various ways such as  \cite{mohapatra, bowang}.  Most recently, \cite{weili}  applied CSO to optimize a multitasking problem and \citet{zhang2020cso}  modified CSO to find optimal experiment designs to estimate interesting parameters in a nonlinear regression model with several interacting factors.

We shall call the particles in CSO   agents.  In each CSO iteration, agents are randomly paired and their functional values are compared.
In each pair, agent with smaller functional value is declared  winner and its configuration remains unchanged in the next iteration. The other agent in the pair  is declared  loser and his configuration in the next iteration will move closer to the winner's configuration and the average configuration of all the agents, see steps 4 and 5 below. Specifically,  CSO  repeats steps 1 to 5 until termination criterion is met.
\begin{enumerate}
\item Randomly pair the agents. Repeat the  steps 2 to 5 below for each pair.
\item Consider a pair of agents indexed as  $i$ and $j$. Define  $w=i,\ell=j$ if $f(\bfx_{i}(t))<f(\bfx_j(t))$. Otherwise $w=j,\ell=i$.
\item Let $\bfx_w(t+1)=\bfx_w(t)$, $\bfv_{w}(t+1)=\bfv_w(t)$.
\item Generate three independent random vectors $U_1,U_2,U_3$  from the uniform distribution on $[0,1]^d$ and let
\begin{eqnarray*}
\bfv_{\ell}(t+1)&=&U_{1}\circ \bfv_{\ell}(t)+ U_2\circ(\bfx_w(t)-\bfx_{\ell}(t)) \\
&& \quad +\phi U_3\circ(\bar{\bfx}(t)-\bfx_l(t)),
\end{eqnarray*}
where $\circ$ denotes element-wise product, $\phi$ a given constant, and $\bar{\bfx}(t)=\sum_{i=1}^n \bfx_i(t)/n$.
\item Update $\bfx_{\ell}(t+1)=\bfx_{\ell}(t)+\bfv_{\ell}(t+1)$.
\end{enumerate}

Our modified CSO, denoted by mCSO, is to apply the PP strategy in  step 5 only to the loser agent:

5') Update
\begin{equation}
\label{eqn:csomod}
\bfx_{\ell}(t+1)=\chi_{\calD}\left(\chi_{\calD}\left(\bfx_{\ell}(t)+\bfv_{\ell}(t+1)\right)+\bfw_{\ell}(t)\right).
\end{equation}
It is not necessary to apply projection on $\bfx_w(t+1)$ because $\bfx_w(t+1)=\bfx_w(t) \in \calD$.

\begin{proposition}
\label{prop:CSO}  Let $f$ be a continuous function over   $\calD$, a compact subset of $\R^d$. Suppose further that a minimizer of $f$ lies in the interior of $\calD$, then 
  the algorithm mCSO converges to $\min_{\bfx \in \calD} f(\bfx)$ a.s..
\end{proposition}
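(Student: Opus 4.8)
The plan is to invoke Theorem~\ref{ascon} and verify that mCSO satisfies conditions (C1)--(C3), following the template already used for mPSO and mBAT. Condition (C1) is the easiest. In each pair the loser's new position is produced by step 5', namely $\bfx_\ell(t+1)=\chi_\calD(\chi_\calD(\bfx_\ell(t)+\bfv_\ell(t+1))+\bfw_\ell(t))$, which lies in $\calD$ because the outer map is $\chi_\calD$; the winner's new position is $\bfx_w(t+1)=\bfx_w(t)$, which lies in $\calD$ provided $\bfx_w(t)$ did. Since the agents are initialized in $\calD$, an induction on $t$ then gives $\bfx_i(t)\in\calD$ for every $i$ and every $t$.

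For (C2) I would observe that step 5' applies the perturbation--projection modification \eqref{eqn:pp} verbatim to every loser, with $\bfx'_\ell(t+1)=\bfx_\ell(t)+\bfv_\ell(t+1)$ and $\bfw_\ell(t)$ drawn from a density satisfying \eqref{eqn:psupport}. Each winner is left in $\calD$ and therefore trivially equals its own projection, while at least one loser exists at every iteration (as $n\ge 2$), so the hypotheses of Proposition~\ref{prop:mod} are met and (C2) follows directly. This step is in fact cleaner than the corresponding one for mBAT, since the loser's position is the output of PP with no subsequent acceptance/rejection step; hence no extra factor such as $(1-r_A)$ enters, and restricting attention to a single loser yields $\max_{1\le i\le n}\Prob_t(\bfx_i(t+1)\in B)\ge \alpha|B|$ exactly as in the proof of Proposition~\ref{prop:mod}.

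The step I expect to be the crux is (C3), because CSO, unlike PSO and BAT, keeps no explicit ``global best'' memory agent whose value is updated only upon improvement; monotonicity of $m(t)=\min_{1\le i\le n} f(\bfx_i(t))$ must instead be read off from the competition mechanism. The key structural fact is that every winner retains its position, hence its function value, unchanged. I would argue as follows: pick an agent $a$ attaining $f(\bfx_a(t))=m(t)$ and let $w$ be the winner of the pair containing $a$. Since a winner's value never exceeds its loser's, $f(\bfx_w(t))\le m(t)$, whereas $m(t)$ being the global minimum forces $f(\bfx_w(t))\ge m(t)$; hence $f(\bfx_w(t))=m(t)$. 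This also covers the tie case in the winner rule of step 2, where $a$ may be declared the loser but its partner $w$ then carries the same value $m(t)$. Because $w$ keeps its position, $f(\bfx_w(t+1))=f(\bfx_w(t))=m(t)$, so $m(t+1)\le f(\bfx_w(t+1))=m(t)$, which holds surely and therefore a.s. With (C1)--(C3) established, Theorem~\ref{ascon} delivers the claimed almost sure convergence of mCSO to $\min_{\bfx\in\calD} f(\bfx)$.
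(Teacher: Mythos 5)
Your proposal is correct and follows essentially the same route as the paper's proof: (C1) via the projection for losers and position-retention for winners, (C2) via Proposition~\ref{prop:mod} applied to the loser agents, and (C3) via the fact that the current best agent's value survives the pairwise competition. Your only addition is the explicit handling of ties in step 2 (where the best agent could be declared loser but its partner then carries the same value), which slightly tightens the paper's informal claim that the best agent ``will be a winner no matter whom he is paired with.''
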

\begin{proof}
%
%
%
By Theorem \ref{ascon} and Proposition \ref{prop:mod}, our claim  will be proved if we can show that mCSO satisfies (C1)--(C3).
Since  $\bfx_{\ell}(t+1) \in \calD$ by projection, and  $\bfx_w(t+1)=\bfx_w(t)$, (C1)  holds. 
As we add noise to  the loser agents, (C2) holds by Proposition \ref{prop:mod}. 
Let  $i_*=\arg \min_{1 \le i \le n} f(\bfx_i(t))$.
In the next iteration, $\bfx_{i_*}(t)$ will be a winner no matter  whom he is paired with. So $\bfx_{i_*}(t+1)=\bfx_{i_*}(t)$; and
$
\min_i f(\bfx_{i}(t+1))\leq
 f(\bfx_{i_*}(t+1))= \min_i f(\bfx_{i}(t)).
$
This proves that (C3) is satisfied and hence  the proof of the proposition.
\end{proof}

\section{Heterogeneous PP}
In applications, the strength of stochastic perturbation in the PP strategy often needs proper tuning.
In general, a stronger perturbation enhances  exploration at the expense of exploitation. During the final stage of optimization, the agents will have difficulties in narrowing down the optimal solution with strong perturbations. There can be different ways to solve this issue such as  choosing a diminishing  perturbation as in the original BAT algorithm; using adaptive adjustment mechanism; or using reinforcement learning tools to find the optimal parameter. However,  these approaches lead to further tuning parameters or additional computational complexity. Instead,  we propose a simpler strategy, which is nature-inspired,  that involves role heterogeneity of members in  the swarm.

For swarms in nature, it is common to observe they adoption division of labors. Agents specializing in different tasks collaborate  to improve  collective effectiveness. In the same vein, we set some  agents in the swarm to  specialize in exploration while others  in exploitation. In the context of PP strategy, we only perturb the  exploration agents so they help exploring the search  space while we do not perturb  exploitation agents so as not to hamper their convergence capability. We call this heterogeneous perturbation-projection, abbreviated to HPP, strategy.
In contrast with the other ways to find optimal perturbation strength, HPP strategy is very easy to implement. For example, if we want to have half exploration agents and half exploitation agents, we will apply PP strategy only to  the first half of the agents while the other agents we only apply the projection modification.  This idea can be applied directly to mPSO and mBAT. As for mCSO, because pairs are randomly formed,  we will apply PP strategy only to  the loser agents   in the first half of pairs.

Our HPP strategy can be considered as a special case of cooperative or collaborative learning, which suggests that mixing agents of different settings can lead to improved overall performance. Existing works on cooperative swarm-based algorithms can be found in \cite{van2000cooperative, van2004cooperative, zhang2019cooperative}. In comparison, our HPP strategy focuses more on the cooperation between exploration agents and exploitation agents.
Recently, \cite{dong2020replica} also applies this idea to local optimization algorithms such as gradient descent and Langevin algorithm.

Note  that Proposition \ref{prop:mod}
only requires the perturbation strategy applied to at least  one agent, so this proposition is still applicable  to algorithms with HPP, which perturbs half of their agents (or loser agents in CSO). Consequently,  Theorem \ref{ascon} guarantees H-PP modified algorithms  converge a.s to a global optimal solution. We denote the modified version of an algorithm $A$ using H-PP strategy by $hmA$.

\section{Numerical experiments}

In this section, we conduct extensive numerical experiments to investigate  whether our modified versions of $A$ ($A =$ PSO, BAT or CSO), $mA$ and $hmA$, outperform or at least perform on par with $A$  given a computing budget for a large class of very different types of cost functions.  We use two  measures to evaluate the performance of the modified algorithms relative to the original algorithm.  They measure how likely  and by how much the modified algorithm  obtains better results  than the original algorithm.

\subsection{Test functions and details of experiments}

We use a total of 30 test functions of varying dimensions  listed in \cite[Table 2]{WNA2015} for our experiments. The list of test functions was compiled from various sources by the authors in \cite{WNA2015}. They include commonly used  test functions such as Ackley, Griewank, Powell, Rastrigin, Sphere etc; and are  of different kinds,  unimodal, multi-modal, separable and inseparable.
Majority of the functions in their table can be defined for arbitrary  dimension $d$. For such functions, we  consider  $d =5, 10, 20$ and $40$ to investigate the effect of dimension on the optimization algorithms. The other test functions (except one which can only be defined for dimension 4, which we exclude in our experiments) can only be defined for $d=2$.
In total, we have a collection of 70 test functions, see Appendix A, denoted by $\mathcal{C}$. We shall partition  $\mathcal{C}$ into different collections   according to $d=2, 5, 10, 20$ and $40$ in reporting our results.

For each $f \in \mathcal{C}$,  we conduct 100 runs of  algorithms $A$, $mA$ and $hmA$ for 10,000 iterations where $A$ = PSO, BAT, or CSO. To gauge the progress of each  run of an algorithm, we output the algorithm best functional value found  at $t$-th iteration, where $t=50, 100, 200, 400, 1000, 3000$ and $10000$.

Throughout the experiments, the number of agents/particles used is  $n=32$. For $mA$ or $hmA$ ($A =$ PSO, BAT or CSO), the stochastic perturbation used is  independent normal distributions  with mean 0 and standard deviation $ 0.005$.

We follow common parameter settings for PSO, BAT and CSO:
\begin{itemize}
\item PSO: $w=0.729$ (inertia weight), $c_1=c_2=1.5$ (acceleration constants);
\item BAT:  $Q_{\min}=0$ (minimum frequency), $Q_{\max}=100$ (maximum frequency), $r_0=0.5$ (pulse rate), $r_A=0.5$ (loudness);
\item CSO: $\phi =0$.
\end{itemize}

\subsection{Comparison  and results}

This subsection compares the performance of an algorithm $A$ versus one of its modification $B$ using several test functions taken from the set $\mathcal{F}$. The functions in the set are commonly used to compare performance of an algorithm in the engineering literature.  To study  how the performance on a problem depends on the dimension of the problem, each $\mathcal{F}$ here is the subset of $\mathcal{C}$ that contains functions of the same dimension $d=2,5,10,20$ or $40$.
 We are interested in the following two questions: How likely does $B$ outperform $A$?  How much does $B$ exceed the best possible value   from using  $A$ and $B$? Here $B=mA$ or $hmA$.
 \medskip

\noindent (a) {\it How likely will  $B$ outperform $A$?} \medskip

Let $A_f(t)$ and $B_f(t)$ denote the (theoretical) best functional values from algorithms $A$ and $B$ respectively for the test function $f$ up to the $t$-th iteration. Similarly, let $A_{r,f}(t)$ and $B_{r,f}(t)$ denote the best functional values output by algorithms $A$ and $B$ respectively  up to the $t$-th iteration at the $r$-th run. The probability that algorithm $B$ outperforms algorithm  $A$ for $f$ at the $t$-th iteration is $\Prob(B_f(t) < A_f(t))$, and it can be estimated by  $\sum_{r=1}^{100} I(B_{r,f}(t) < A_{r,f}(t))/100$. The average of   these estimates
over  $\mathcal{F}$, denoted by $P_{B \succ A}(t)$, can be thought of  as an average winning proportion of $B$ over $A$ (or simply, it's  winning proportion). Specifically,
$$ P_{B \succ A}(t):=\frac1{100 |\mathcal{F}|} \sum_{f \in \mathcal{F}} \sum_{r=1}^{100} I( B_{r,f}(t) < A_{r,f}(t)). $$
Similarly, interchanging $A$ and $B$ above, we define the winning proportion of $A$ over $B$, $ P_{A \succ B}(t)$.
Since $ P_{A \succ B}(t) +  P_{B \succ A}(t) =1$, it suffices to track $ P_{B \succ A}(t)$. Loosely speaking, if $ P_{B \succ A}(t) >1/2$,   $B$ is more likely to outperform $A$ for $f \in \mathcal{F}$. Indeed, the larger the  $ P_{B \succ A}(t)$, the more likely  $B$ outperforms  $A$.

\begin{figure}[t]
  \includegraphics[width=0.5\textwidth, height=0.25\textheight]{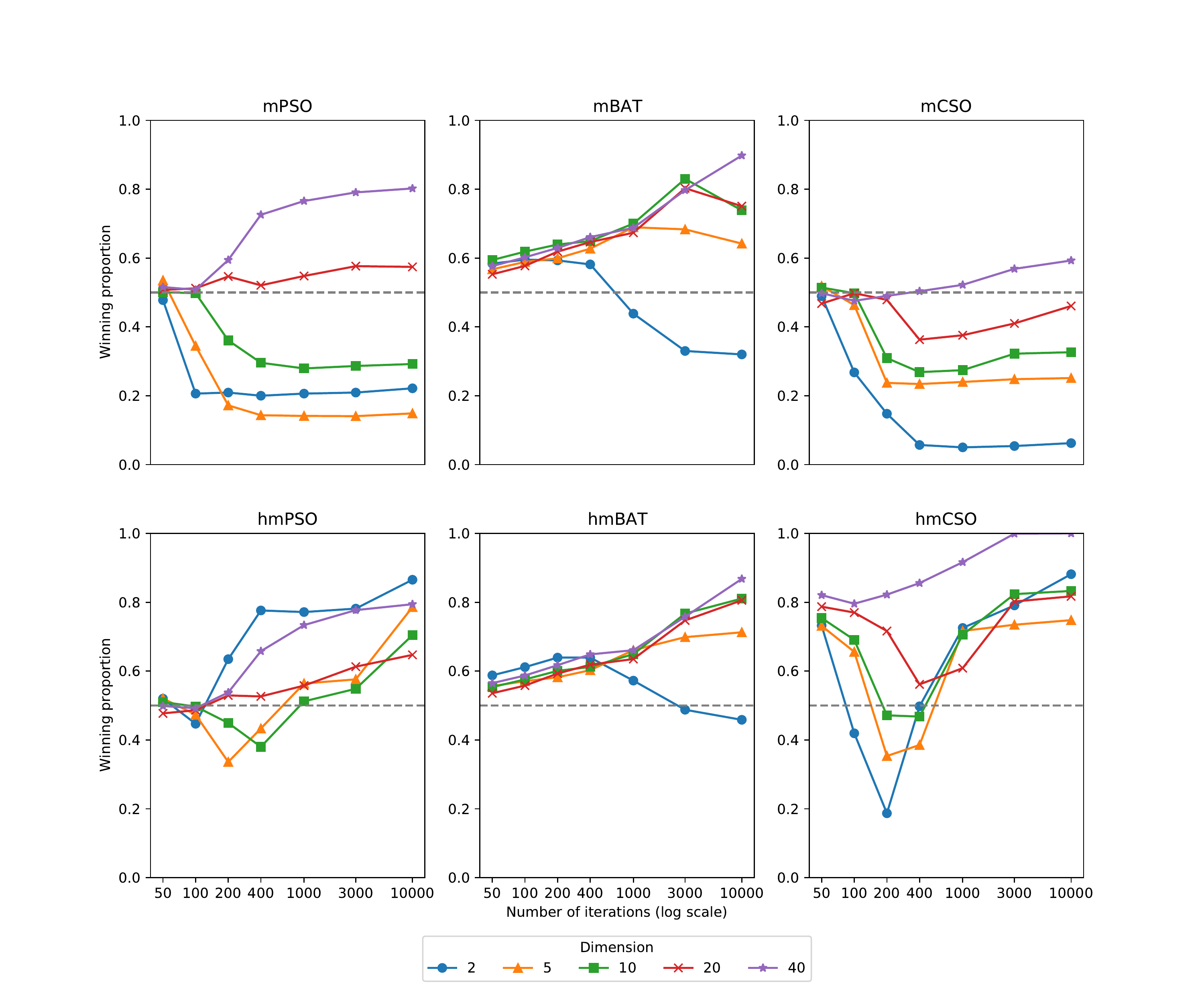}
  \caption{Plots of winning proportion of  $mA$ against $A$ (top row), and that of $hmA$ against $A$ (bottom row) where $A = $ PSO, BAT and CSO categorized according to the dimension of the test functions.}
  \label{fig:winning proportions}
\end{figure}

The numerical results are plotted in Figure \ref{fig:winning proportions}. Based on these plots, we observe have the followings.  When $t$ is fixed, the winning proportion  of PP/HPP modified algorithm outperforming  its counterpart  increases as the dimension of the test function increases. Both mBAT and hmBAT are comparable in their performance against BAT, and the winning proportion increases steadily from about 0.5 to 0.7 for dimension  at least 5. Interestingly, the HPP strategy has more significant enhancement effect on PSO and CSO. For test functions with dimension 40, winning proportion of hmCSO is nearly 1 after 3000 iterations.
\medskip

\noindent (b) {\it How much does $B$ (or $A$) outperform  $A$ (or  $B$)?} \medskip

For simplicity, we suppress the dependence of a notation   on other parameters/notations when the context is clear.
To this end, let  $m_*$ and  $m^*$   denote respectively,
the minimum (i.e., the best) and maximum (i.e., the worst) of the outputs  from 100 runs of $A$ and 100 runs of $B$ for $f$ at the $t$-th iteration.
We view $m_*$ as a proxy of the best possible output by $A$ and $B$; and  let $R = m^*-m_*$ denote the range of the  200 outputs of $A$ and $B$. Define the relative error  of $A$ with respect to $A$ and $B$, $RE_{A, A\cdot B, f}(t)$, to be the average of $(A_{r,f}(t) - m_*)/R$ over 100 runs; and the relative error of $B$ relative to $A$ and $B$, $RE_{B, A \cdot B, f}(t)$ is similarly defined. We  consider  the relative instead of  absolute  error in order  to cancel out the effect due to a scale change of $f$ (i.e., if we consider $cf$ instead of $f$). For brevity, we simply call this quantity the relative error.
Note that the relative errors lie between 0 and 1. Small relative error of $A$ (i.e., $ RE_{A, A\cdot B, f}(t)$)  indicates the results from algorithm $A$, as a whole,   are closer to the best possible value $m_*$. Further, if the relative error  of $A$ is less than that of $B$, then generally results from algorithm $A$ are closer to $m_*$ than those  by $B$'s.
A similar interpretation can be extended over a class of functions $f \in \mathcal{C}$ if we define   $RE(A, A\cdot B)(t):=\frac{1}{|\mathcal{F}|} \sum_{f \in \mathcal{F}} RE_{A, A \cdot B, f}(t)$ and call this the overshoot of $A$ relative to $A$ and $B$.

\begin{figure}[t]
\includegraphics[width=0.5\textwidth, height=0.25\textheight]{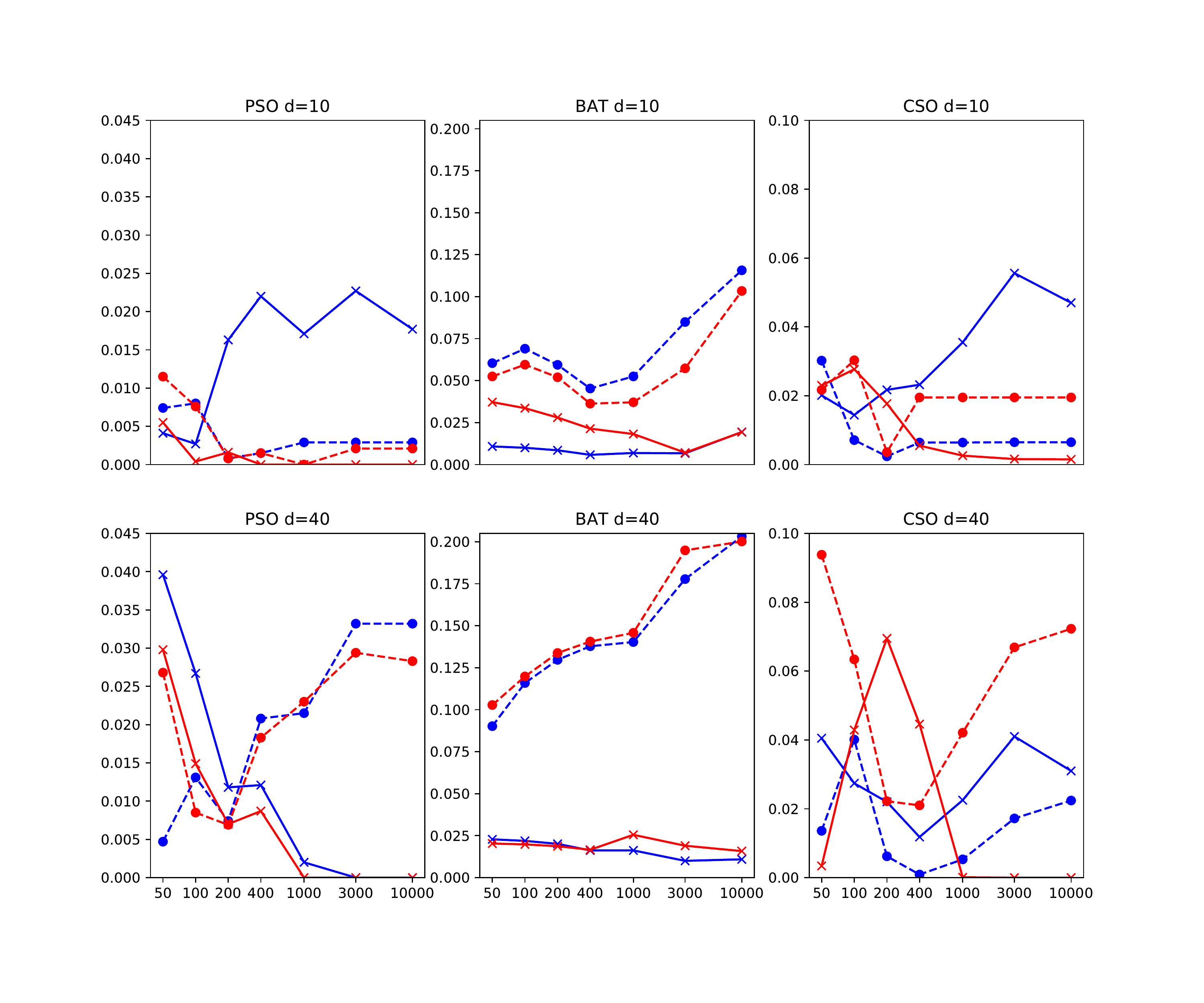}
  \caption{Plots of relative error  of $A$ relative to $A$ and $mA$ (dashed blue curve), and relative error  of $mA$ relative to $A$ and $mA$ (solid blue curves). Plots of relative error of $A$ relative to $A$ and $hmA$ (dashed red curves), and relative error of $hmA$ relative to $A$ and $hmA$ (solid red curves).  }
  \label{fig:winning margins}
\end{figure}

Figure \ref{fig:winning margins} plots the relative errors of $A$  (dashed blue  curve) and  $mA$ (solid blue  curve) with respect to one another; and the relative errors of $A$  (dashed red curve) and  $hmA$ (solid red  curve) with respect to one another. We only display the plots for $d=10$ and $d=40$. We highlight some observations from these plots in Figure \ref{fig:winning margins}.  (i) hmBAT and mBAT outperform BAT by a significant margin. (ii) At $d=40$, both mPSO and hmPSO are nearly 0 from 1000 iterations onwards. This implies mPSO and hmPSO  significantly outperform the basic  PSO from 1000 iterations. At $d=10$, only hmPSO (resp.,  hmCSO) improves the basic PSO (resp., CSO) from 400 iterations.

Combining the observations in (a) and (b), Figures \ref{fig:winning proportions} and \ref{fig:winning margins} convincingly demonstrate that the HPP strategy significantly improves BAT, CSO and PSO particularly when the dimension of a test function is large. Improvement is measured in two complementary aspects,   ``how likely'' and ``how much''  one algorithm outperforms the other.

\subsection{Tuning parameter analysis for PP and HPP strategies }

As demonstrated in Section V-B,  additional stochastic component in a modified swarm-based algorithm enhances the exploration ability of the algorithm under consideration. The stochastic component chosen is  Gaussian  with mean 0 and standard deviation, $\sigma=0.005$.  Expectedly the noise level, in terms of  $\sigma$ or a heavier-tailed distribution, affects the efficiency of the modified algorithm since  strong level of noise interferes  the algorithm's  exploitation ability.  We conduct further numerical experiments with the same set-up as above to  investigate
the effects of $\sigma$ in the Gaussian noise and $t$-distributions, which have heavier tails than the Gaussian, on the performance of $mA$ and $hmA$.
\vspace{1em}

\noindent (i) {\it  On the standard deviation of the Gaussian noise}~ We tried $\sigma =0.005, 0.01, 0.02$ and $0.05$. The plots  of $P_{B\prec A}(t)$ (analogous to Figure \ref{fig:winning proportions}) and $RE(A, A\cdot B)(t)$ (analogous to Figure \ref{fig:winning margins})  are very similar for $hmA$ ($A=$ PSO, BAT and CSO) for all the $\sigma$s; and $\sigma =0.005$ and $0.01$. Hence, we recommend using $\sigma$ in $[0.005, 0.01]$.  Interested readers can refer to Figures \ref{fig:proportion_s2} and \ref{fig:proportion_s3} in Appendix \ref{section:additional numerical results}.
\vspace{1em}

\noindent (ii) {\it  On the choice of distribution} ~ We examine the effect on the performance of the modified algorithms if we change the  Gaussian distribution with a heavier-tailed distribution. We choose distributions $0.01 \sqrt{(m-2)/m} \ t_m $ for $m = 5, 10, 30$ and $60$ where $t_m$ denotes a $t$-distribution with $m$ degrees of freedom. The scaling factor $0.01 \sqrt{(m-2)/m}$ is to ensure the scaled $t$-distribution has the same standard deviation as that of the Gaussian noise in (i) above. Note also that $t_m$ converges to a standard normal. For $m$ large, we do not expect  essential difference between the $t$-distribution and the Gaussian noise.

The analogous plots of  $P_{B\prec A}(t)$ and $RE(A, A\cdot B)(t)$ using $t$-distributions for the choices of degrees of freedom (df)   look almost the same as the corresponding plots using Gaussian noise. Interested readers can view these plots as Figure \ref{fig:proportion_s4}-\ref{fig:proportion_s5} in Appendix \ref{section:additional numerical results}.

\section{Conclusion}

In this article, we delineate sufficient conditions for a swarm-based algorithm to converge almost surely to  an optimum of an objective  function.  This assumes that there is no constraint on the computation budget and the algorithm is allowed to run indefinitely. The class of objective functions for which Theorem \ref{ascon} holds is very large. We proposed two strategies, perturbation-project (PP) and heterogeneous perturbation-projection (HPP), to modify a given swarm-based algorithm so that the sufficient conditions (C1)--(C3) hold, and hence Theorem \ref{ascon} guarantees the modified algorithm converges almost surely to the global optimum. The proposed strategies are simple to implement and not algorithm-specific. We demonstrated how these strategies are applied to  PSO, Bat Algorithm and CSO. Extensive numerical experiments were conducted and the results  show that the modified algorithm either out-performs or performs on par with the original algorithm over a finite computational budget, and especially so for HPP modified algorithm.   Our simulation results suggest that  applying the HPP strategy with mean 0 and standard deviation 0.005 Gaussian distribution for the additional stochastic component is likely to be effective in improving a swarm-based algorithm.

To conclude this article, we report two interesting observations from our experiments. The first observation is that
  when $A$ outperforms $mA$ or $hmA$,  $A$ frequently produces a marginally better result; however,  when $mA$ or $hmA$ outperforms $A$, the modified version quite often outperforms $A$ by  relatively larger margin.
  We offer a heuristic argument. When both $A$ and $mA$ are exploring  in the same neighbourhood  of a local minimum, the stochastic element hinders the exploitation by a small amount and hence $A$ is more likely to produce only marginally better solution. However, when the occasion for $mA$ to make a big jump leaving the local minimum's neighbourhood arises,  $mA$ has the potential to   explore a better region and hence produces noticeably better solution than the original algorithm does.
  The same heuristic reasoning carries over to the comparison of $A$ and $hmA$. Moreover, $hmA$ version due to the built in diversity of agents,  better balances the conflicting demands of exploitation and exploration, and expectedly amplifies this  observation.

The second observation comes from our application of PP and HPP strategies to Differential Evolution (DE), an evolutionary algorithm.  The same numerical experiments were conducted to compare DE with mDE, and DE with hmDE. We found that DE outperforms mDE and hmDE; the plots  can be found in Appendix \ref{section:differential evolution} as Figures \ref{fig:proportion_s6}-\ref{fig:margin_s7}.  A reason may be that DE already performs very well in the optimization problem over majority of the test functions in the experiment and so any additional stochastic component will either not help or even worsen the search solution.  Moreover, the random perturbation we apply here mimic the random local exploration of an ant or other animal. Genetic mutation in general cannot describe such dynamics and require some other more structured mechanism.


%

\appendices

\section*{Acknowledgment}
Choi's research was supported by the Singapore MOE Academic Research Funds R-155-000-222-114. Lai's research was supported by the National Science Foundation under DMS-1811818.   Tong's research was supported by MOE Academic Research Funds R-146-000-292-114. Wong's research was supported by the National Institutes of Health under R01GM107639.





%
\bibliography{ref}

\newpage

\renewcommand{\thetable}{S\arabic{table}}
\renewcommand{\thefigure}{S\arabic{figure}}

\section{Test functions used in the numerical simulations}
\begin{longtable}{|ccccc|}

    \hline
    Label & Name &  min f(x) & Unimodal & Separable \\
    \specialrule{1.5pt}{1pt}{1pt}
    \endfirsthead


    \hline
    \endfoot

    \endlastfoot

    F1               & Ackley               &  0             & No        & No         \\ \hline
    \multicolumn{5}{|l|}{$ -20\exp(-0.2\sqrt{\frac{1}{n}\sum_{i=1}^{n}x_i^2})-\exp(\frac{1}{n}\sum_{i=1}^{n}\cos(2\pi x_i))+ 20 + e$} \\ \specialrule{1.5pt}{1pt}{1pt}
    F2               & Bohachevsky2         &  0             & No        & No         \\ \hline
    \multicolumn{5}{|l|}{$ \sum_{i=1}^{d-1} \left[ x_i^2 + 2x_{i+1}^2 -0.3\cos(3\pi x_i)\cos(4\pi x_{i+1})+0.3 \right]$} \\ \specialrule{1.5pt}{1pt}{1pt}
    F3               & Bohachevsky3         &  0             & No        & No         \\ \hline
    \multicolumn{5}{|l|}{$ \sum_{i=1}^{d-1} \left[ x_{i+1}^2 + 2x_{i+2}^2 -0.3\cos(3\pi x_i+4\pi x_{i+1})+0.3 \right]$} \\ \specialrule{1.5pt}{1pt}{1pt}
    F4               & Bukin6               &  0             & No        & No         \\ \hline
    \multicolumn{5}{|l|}{$100\sqrt{|x_2 - 0.01x_1^2|} + 0.01|x_1+10|$} \\ \specialrule{1.5pt}{1pt}{1pt}
    F5               & DropWave             &  -1             & No        & No         \\ \hline
    \multicolumn{5}{|l|}{$  -\frac{1+\cos(12\sqrt{x_1^2+x_2^2})}{0.5(x_1^2+x_2^2) + 2}$)} \\ \specialrule{1.5pt}{1pt}{1pt}
    F6               & Eggholder     &  -959.6407     & No        & No         \\ \hline
    \multicolumn{5}{|l|}{$  -(x_2 + 47) \sin(\sqrt{|x_2 + \frac{x_1}{2} +47|}) - x_1\sin(\sqrt{|x_1-(x_2+47)|})$)} \\ \specialrule{1.5pt}{1pt}{1pt}
    F7               & GoldSteinPrice        &  3             & No        & No         \\ \hline
    \multicolumn{5}{|l|}{$[1 + (x_1 + x_2 + 1)^2(19 - 14x_1+3x_1^2- 14x_2 + 6x_1x_2 + 3x_2^2)] \times$} \\ 
    \multicolumn{5}{|l|}{$[30 + (2x_1 - 3x_2)^2(18 - 32x_1 + 12x_1^2 + 4x_2 - 36x_1x_2 + 27x_2^2)]$} \\ \specialrule{1.5pt}{1pt}{1pt}
    F8               & Griewank              &  0             & No        & No         \\ \hline
    \multicolumn{5}{|l|}{$1 + \sum_{i=1}^d \frac{x_i^2}{4000} - \prod_{i=1}^d \cos(\frac{x_i}{\sqrt{i}})$} \\ \specialrule{1.5pt}{1pt}{1pt}
    F9               & McCormick      &  -1.9133       & No        & No         \\ \hline
    \multicolumn{5}{|l|}{$  \sin(x_1+x_2) + (x_1-x_2)^2 - 1.5x_1 + 2.5x_2 + 1$} \\ \specialrule{1.5pt}{1pt}{1pt}
    F10              & Schaffer2            &  0             & No        & No         \\ \hline
    \multicolumn{5}{|l|}{$  0.5 + \frac{\sin^2(x_1^2 - x_2^2) - 0.5}{(1 + 0.001(x_1^2 + x_2^2))^2}$} \\ \specialrule{1.5pt}{1pt}{1pt}
    F11              & Schaffer4            &  0.292579             & No        & No         \\ \hline
    \multicolumn{5}{|l|}{$  0.5 + \frac{\cos^2(\sin(\lvert x_1^2 - x_2^2 \rvert)) - 0.5}{(1 + 0.001(x_1^2 + x_2^2))^2}$)} \\ \specialrule{1.5pt}{1pt}{1pt}
    F12              & Bohachevsky1          &  0             & No        & Yes        \\ \hline
    \multicolumn{5}{|l|}{$ \sum_{i=1}^{d-1} x_i^2 + 2x_{i+1}^2 -0.3\cos(3\pi x_i)-0.4\cos(4\pi x_{i+1})+0.7$} \\ \specialrule{1.5pt}{1pt}{1pt}
    F13              & Booth                &  0             & No        & Yes        \\ \hline
    \multicolumn{5}{|l|}{$(x_1+2x_2-7)^2+(2x_1+x_2-5)^2$} \\ \specialrule{1.5pt}{1pt}{1pt}
    F14              & Branin        &  0.397887      & No        & Yes        \\ \hline
    \multicolumn{5}{|l|}{$  (x_2 - 
    \frac{5.1}{4\pi^2}x_1^2 + \frac{5}{\pi}x_1^2 - 6)^2 + 10(1-\frac{1}{8\pi}\cos x_1 + 10$)} \\ \specialrule{1.5pt}{1pt}{1pt}
    F15              & Michalewicz5 &  -4.687658     & No        & Yes        \\ \hline
    \multicolumn{5}{|l|}{$ -\sum_{i=1}^5 \sin(x_i) \sin^{20} (\frac{ix_i^2}{\pi})$} \\ \specialrule{1.5pt}{1pt}{1pt}
    F16              & Rastrigin           &  0             & No        & Yes        \\ \hline
    \multicolumn{5}{|l|}{$ 10d + \sum_{i=1}^d [x_i^2 - 10\cos(2\pi x_i)]$} \\ \specialrule{1.5pt}{1pt}{1pt}
    F17              & Shubert        &  -186.73       & No        & Yes        \\ \hline
    \multicolumn{5}{|l|}{$ \prod_{j=1}^2 (\sum_{i=1}^5 i \cos((i+1)x_j + i))$} \\ \specialrule{1.5pt}{1pt}{1pt}
    F18              & Beale               &  0             & Yes       & No         \\ \hline
    \multicolumn{5}{|l|}{$  (1.5-x_1+x_1x_2)^2+(2.25-x_1+x_1x_2^2)^2+(2.625-x_1+x_1x_2^3)^2$} \\ \specialrule{1.5pt}{1pt}{1pt}
    F19              & DixonPrice            &  0             & Yes       & No         \\ \hline
    \multicolumn{5}{|l|}{$ (x_1-1)^2 + \sum_{i=2}^d i(2x_i^2-x_{i-1})^2$)} \\ \specialrule{1.5pt}{1pt}{1pt}
    F20              & Easom                &  -1            & Yes       & No         \\ \hline
    \multicolumn{5}{|l|}{$  -\cos(x_1)\cos(x_2)\exp (-(x_1-\pi)^2-(x_2-\pi)^2)$)} \\ \specialrule{1.5pt}{1pt}{1pt}
    F21              & Matyas                &  0             & Yes       & No         \\ \hline
    \multicolumn{5}{|l|}{$  0.26(x_1^2 + x_2^2) - 0.48x_1x_2$} \\ \specialrule{1.5pt}{1pt}{1pt}
    F22              & Powell                &  0             & Yes       & No         \\ \hline
    \multicolumn{5}{|l|}{$ \sum_{i=1}^{d/4} [(x_{4i-3} + 10x_{4i-2})^2 + 5(x_{4i-1} - x_{4i})^2 + (x_{4i-2} - 2x_{4i-1})^4 + 10(x_{4i-3} - x_{4i})^4]$} \\ \specialrule{1.5pt}{1pt}{1pt}
    F23              & Rosenbrock           &  0             & Yes       & No         \\ \hline
    \multicolumn{5}{|l|}{$ \sum_{i=1}^{d-1} [100(x_{i+1} - x_i^2)^2 + (x_i - 1)^2]$} \\ \specialrule{1.5pt}{1pt}{1pt}
    F24              & Schwefel &  -418.9829d    & Yes       & No         \\ \hline
    \multicolumn{5}{|l|}{$ 418.9829d - \sum_{i=1}^d x_i \sin(\sqrt{|x_i|})$} \\ \specialrule{1.5pt}{1pt}{1pt}
    F25              & Trid6 &  -d(d+4)(d-1)/6 & Yes       & No         \\ \hline
    \multicolumn{5}{|l|}{$ \sum_{i=1}^d (x_i - 1)^2 - \sum_{i=1}^d x_ix_{i-1}$} \\ \specialrule{1.5pt}{1pt}{1pt}
    F26              & Zakharov              &  0             & Yes       & No         \\ \hline
    \multicolumn{5}{|l|}{$ \sum_{i=1}^d x_i^2 + (\sum_{i=1}^d 0.5ix_i)^2 + (\sum_{i=1}^d 0.5ix_i)^4$} \\ \specialrule{1.5pt}{1pt}{1pt}
    F27              & Sphere              &  0             & Yes       & Yes        \\ \hline
    \multicolumn{5}{|l|}{$ \sum_{i=1}^d x_i^2$} \\ \specialrule{1.5pt}{1pt}{1pt}
    F28              & Sumsquare            &  0             & Yes       & Yes        \\ \hline
    \multicolumn{5}{|l|}{$ \sum_{i=1}^d ix_i^2$} \\ \hline

    \captionsetup{skip=30pt}
    \caption{Table of test functions containing their function label, name, known minimum value, unimodality and separability.
    Functions F4, 5, 6, 7, 9, 10, 11, 13, 14, 17, 18, 20, 21 have dimension 2, and F15 is dimension 5. All other functions were
    tested at dimensions 5, 10, 20 and 40.}
    \label{tab:funcinfo}

\end{longtable}

\newpage
\section{Additional numerical results}
\label{section:additional numerical results}
Here we report the simulation results of  dimensions (Figure \ref{fig:margin_s1}) and noise setups (Figures \ref{fig:proportion_s2}-\ref{fig:proportion_s5}).
\begin{figure}[ht]
    \centerline{\includegraphics[width=\columnwidth, height=0.8\textheight]{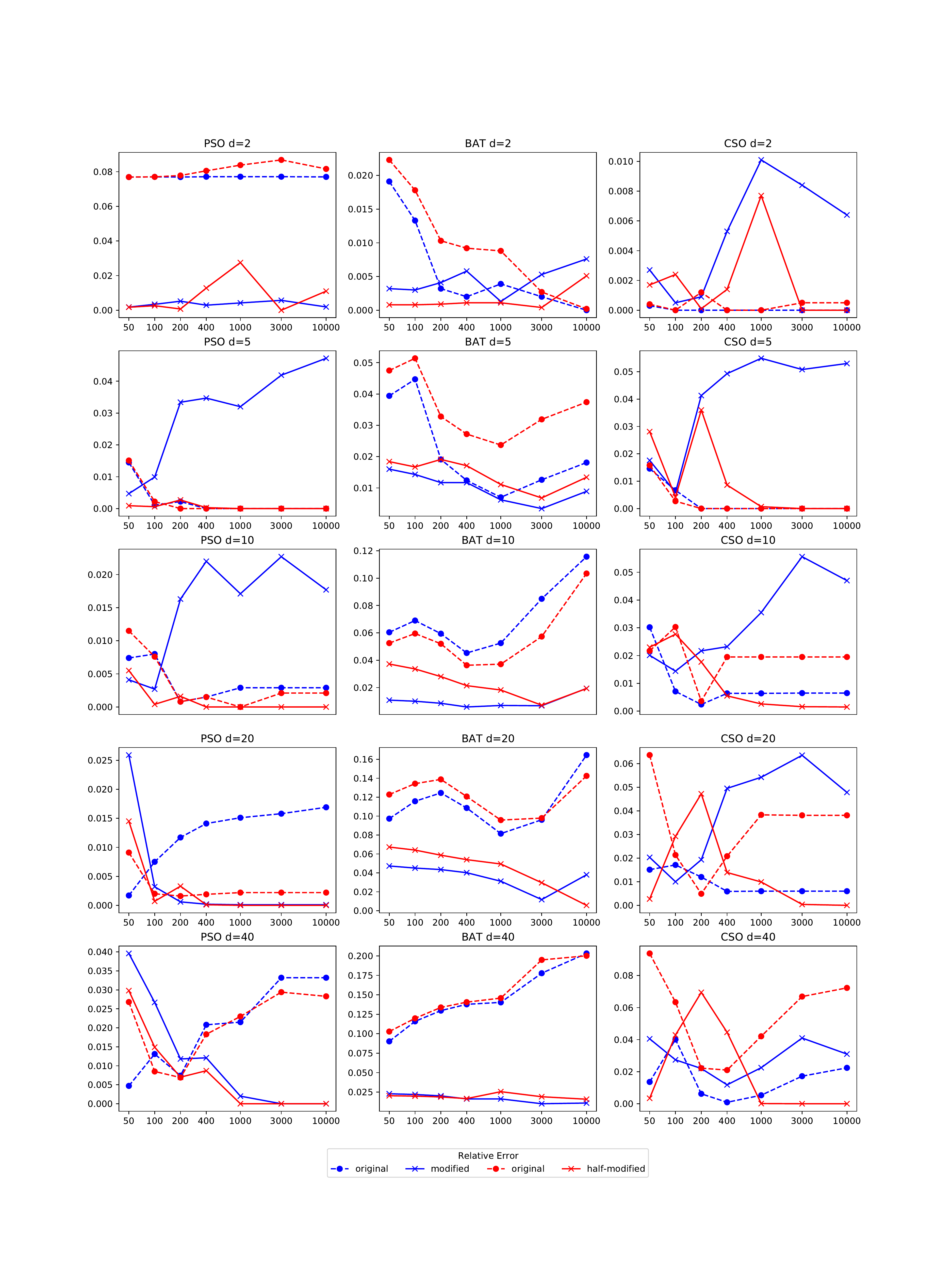}}
    \caption{Plots of relative error  of $A$ relative to $A$ and $mA$ (dashed blue curve), and relative error
    of $mA$ relative to $A$ and $mA$ (solid blue curves). Plots of relative error of $A$ relative to $A$ and $hmA$
    (dashed red curves), and relative error of $hmA$ relative to $A$ and $hmA$ (solid red curves).  }
    \label{fig:margin_s1}
\end{figure}

\begin{figure}[ht]
    \centerline{\includegraphics[width=\columnwidth, height=0.8\textheight]{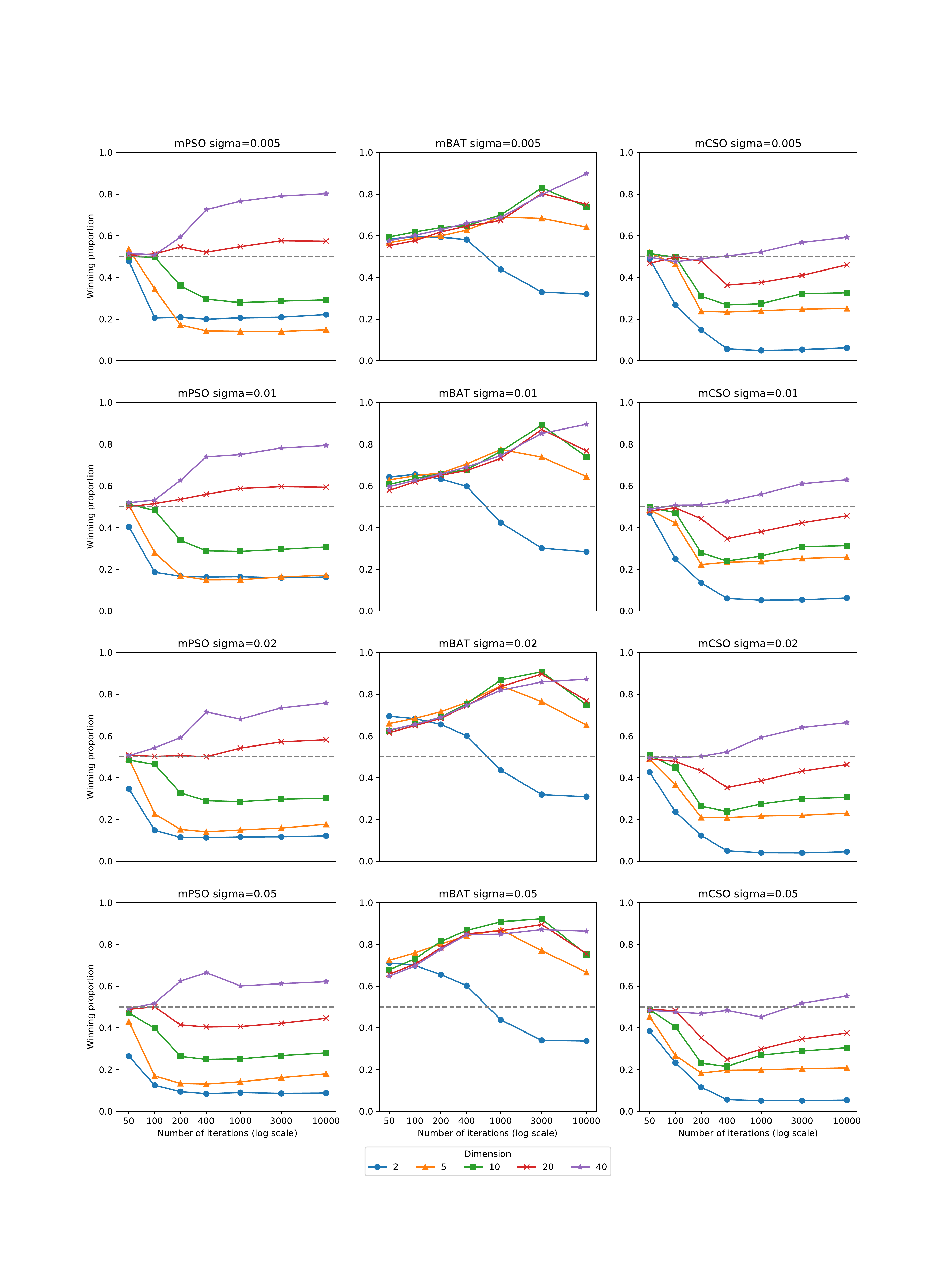}}
    \caption{Plots of winning proportion of $mA$ against $A$ where $A = $ PSO, BAT and CSO categorized
    according to the dimension of the test functions. Each row shows the result for one value of sigma $\sigma$,
    where $\sigma = 0.005, 0.01, 0.02, 0.05$.}
    \label{fig:proportion_s2}
\end{figure}

\begin{figure}[ht]
    \centerline{\includegraphics[width=\columnwidth, height=0.8\textheight]{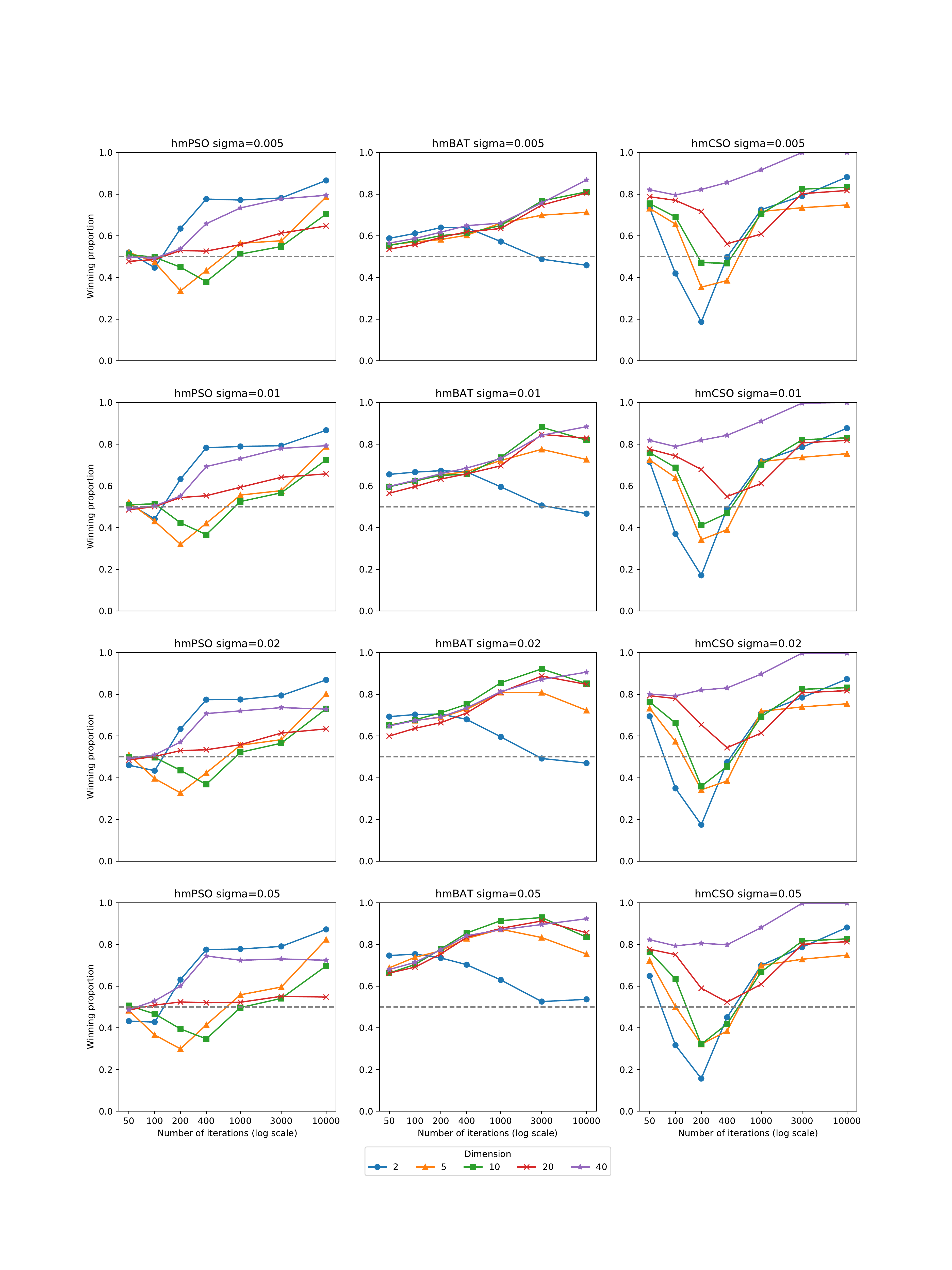}}
    \caption{Plots of winning proportion of $hmA$ against $A$ where $A = $ PSO, BAT and CSO categorized
    according to the dimension of the test functions. Each row shows the result for one value of sigma $\sigma$,
    where $\sigma = 0.005, 0.01, 0.02, 0.05$.}
    \label{fig:proportion_s3}
\end{figure}

\begin{figure}[ht]
    \centerline{\includegraphics[width=\columnwidth, height=0.8\textheight]{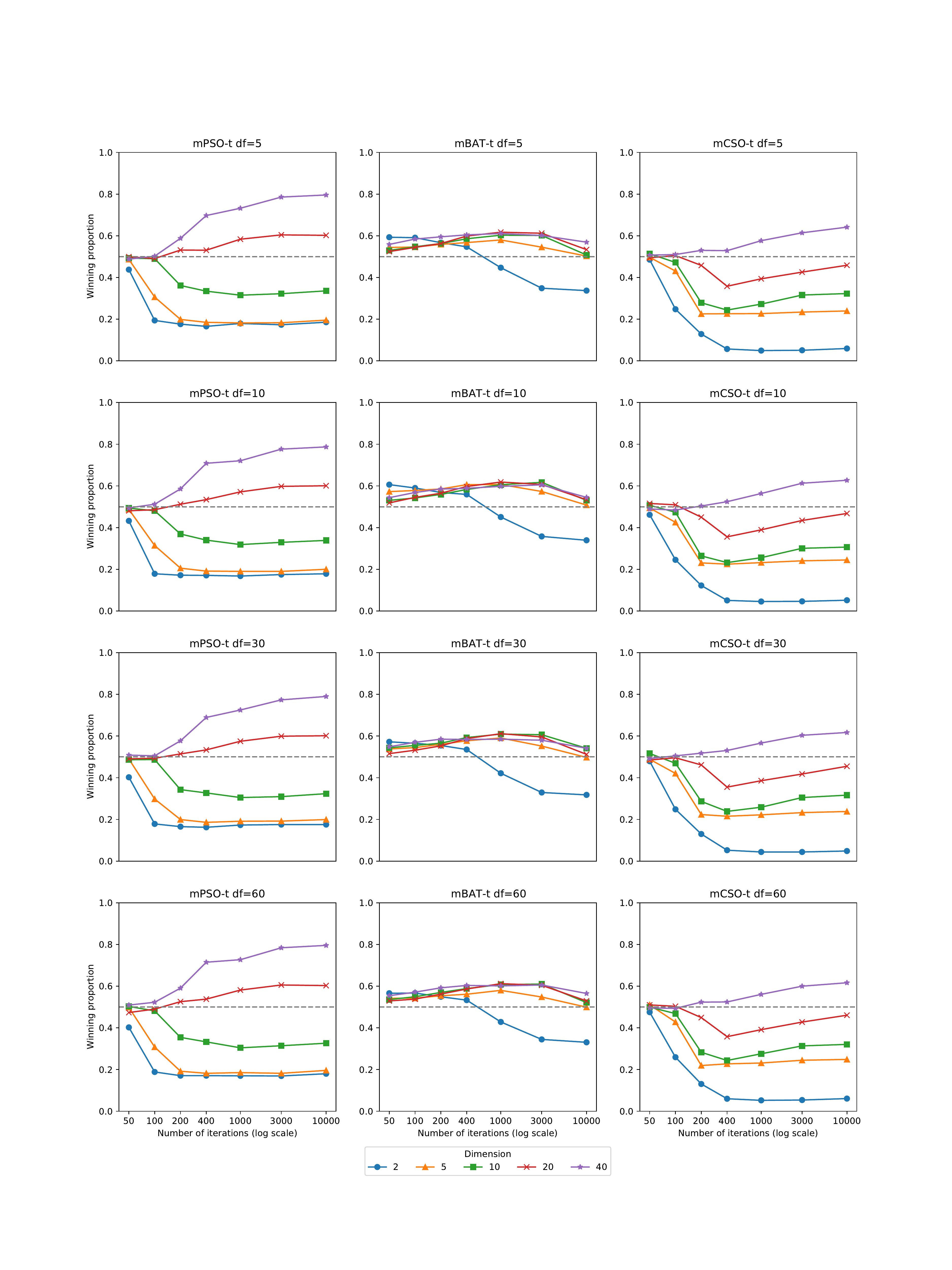}}
    \caption{Plots of winning proportion of $mA-t$ against $A$ where $A = $ PSO, BAT and CSO categorized
    according to the dimension of the test functions. Each row shows the result for one value of degrees
    of freedom $df$, where $df = 0.005, 0.01, 0.02, 0.05$.}
    \label{fig:proportion_s4}
\end{figure}

\begin{figure}[ht]
    \centerline{\includegraphics[width=\columnwidth, height=0.8\textheight]{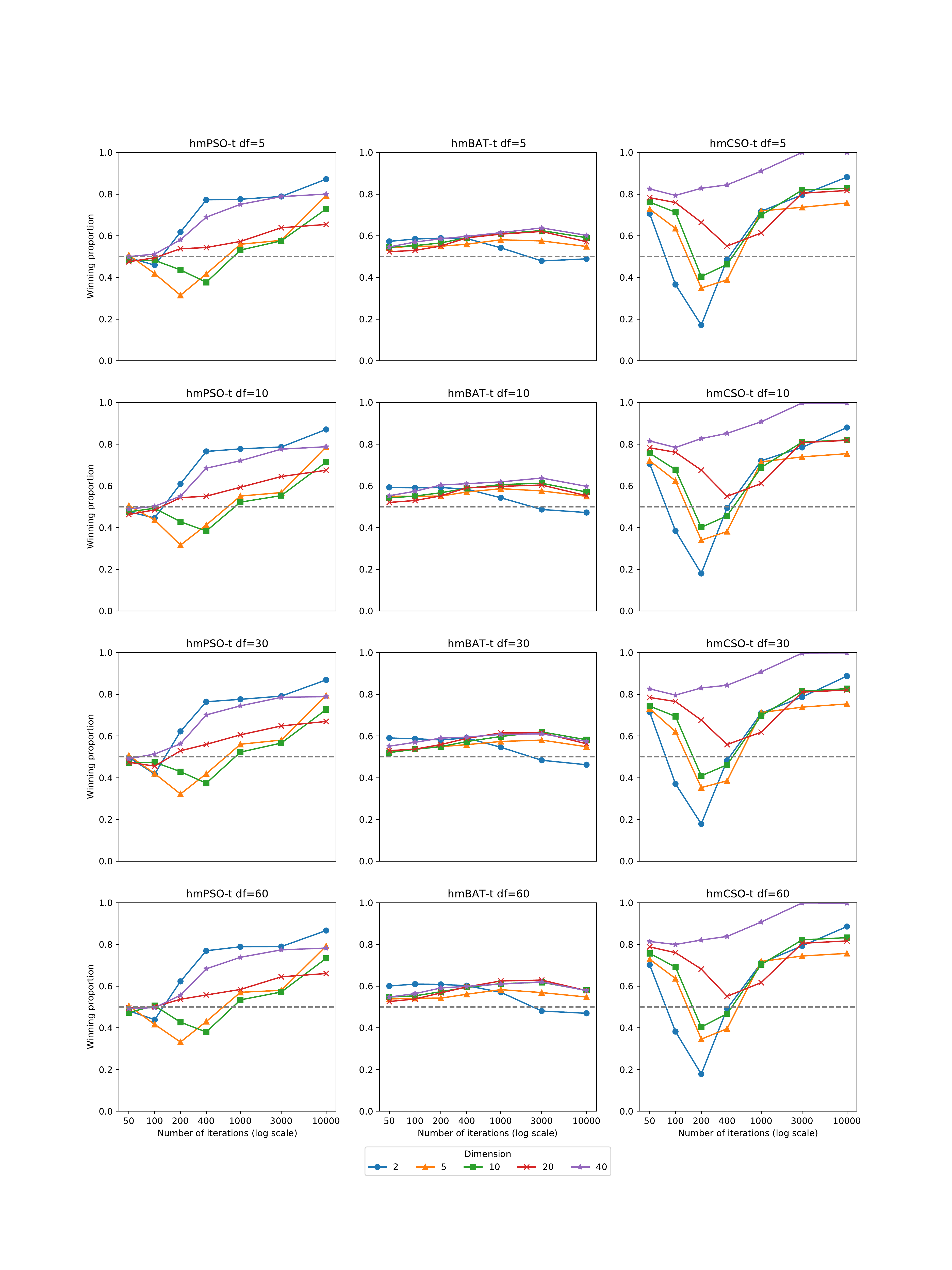}}
    \caption{Plots of winning proportion of $hmA-t$ against $A$ where $A = $ PSO, BAT and CSO categorized
    according to the dimension of the test functions. Each row shows the result for one value of degrees
    of freedom $df$, where $df = 0.005, 0.01, 0.02, 0.05$.}
    \label{fig:proportion_s5}
\end{figure}

\pagebreak

\clearpage

\section{Differential Evolution (DE) and its modification}
\label{section:differential evolution}
Here we review the DE algorithm and call the particles in DE as  agents.  In each DE iteration, each agent considers a new proposed location and move to the new location if it provides a better function value.
The  proposed movement is obtained by truncating the difference between two other random agents at a random component. More specifically,  DE  repeats steps 1 to 6 until the user-specified termination criterion is met.
\begin{enumerate}
\item Repeat the following step for each agent $\bfx_i(t)$
\item Randomly find two other agents $\bfx_j(t)$ and $\bfx_k(t)$.
\item Compute $\bfy_i(t)=\bfx_i(t)+F(\bfx_j(t)-\bfx_k(t))$, where $F\in [0,2]$ is a constant known as \emph{differential weight} and its default value is $0.8$.
\item Randomly find a component index $\ell \in \{1,\ldots, d\}$.
\item For component $m \in \{1, \ldots , d\} \setminus \{\ell\}$, replace the $m$th component of $\bfy_i(t)$ by the corresponding component of $\bfx_i(t)$  with probability $1-\omega$, where  $\omega\in [0,1]$  is a constant known as \emph{crossover probability}, and its default value is 0.9.
\item If $f(\bfx_i(t))>f(\bfy_i(t))$, let $\bfx_{i}(t+1)=\bfy_{i}(t)$; otherwise let $\bfx_{i}(t+1)=\bfx_{i}(t)$.
\end{enumerate}

Our modified DE, denoted by mDE, is to add an additional  step for step 6\\
 \text{6'. Let }$\bfy'_{i}(t)=\chi_{\calD}(\chi_{\calD}(\bfy_i(t))+\bfw_{\ell}(t))$ and
\begin{equation}
\label{eqn:demod}
\bfx_{i}(t+1)=\begin{cases}
\bfy'_{i}(t),\quad \text{if } f(\bfy'_i(t))<f(\bfx_i(t));\\
\bfx_{i}(t),\quad \text{if } f(\bfy'_i(t))\geq f(\bfx_i(t)).\\
\end{cases}
 \end{equation}

The simulations use the same setups as the ones for PSO. The results are plotted in Figures \ref{fig:proportion_s6} and \ref{fig:margin_s7}. The modification actually worsen the optimization output.

\begin{figure}[ht]
    \centerline{\includegraphics[width=\columnwidth, height=0.4\textheight]{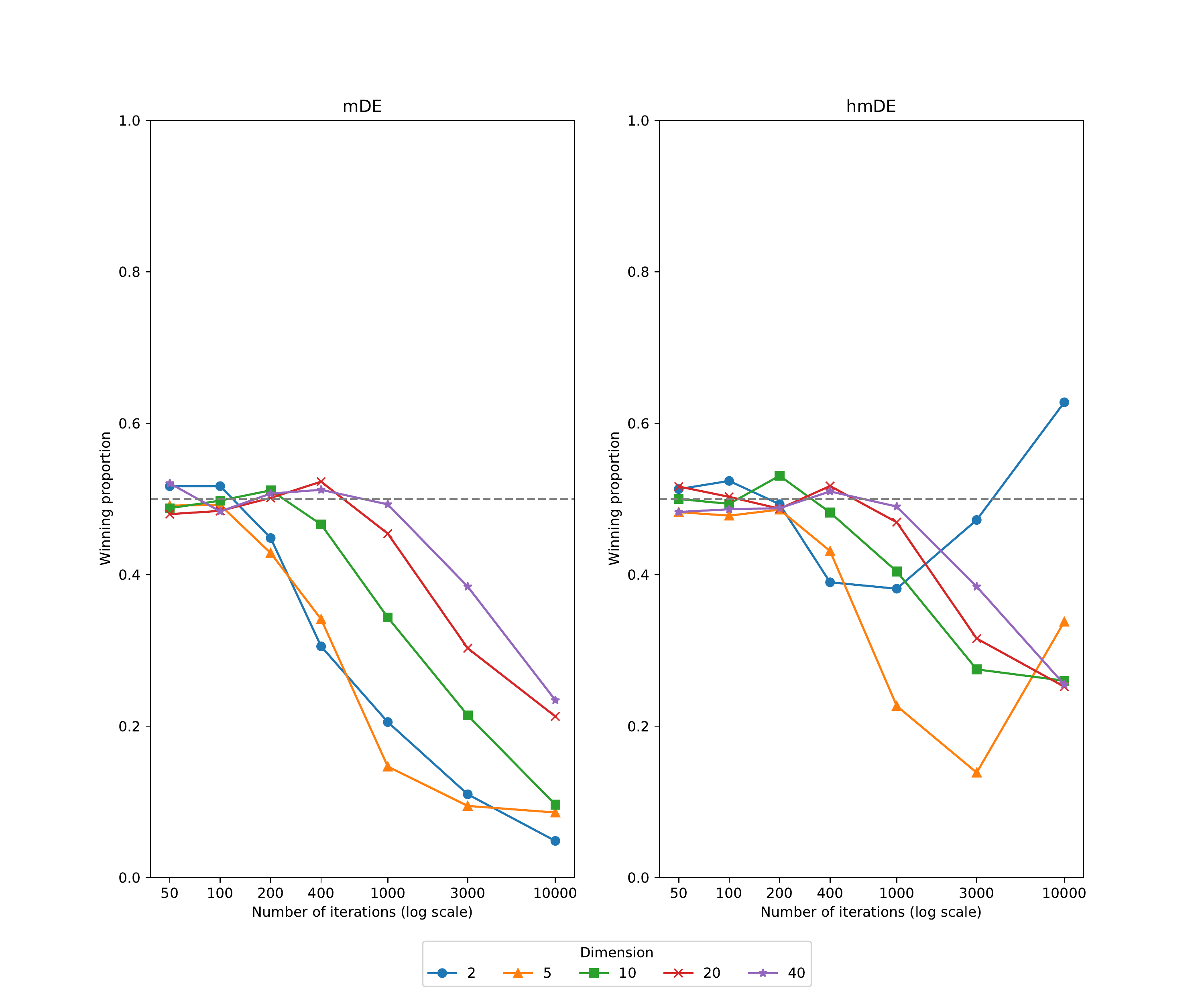}}
    \caption{Plots of winning proportion of $mDE$ against $DE$ (left), and that of $hmDE$ against $DE$ (right)
    where DE categorized according to the dimension of the test functions.}
    \label{fig:proportion_s6}
\end{figure}

\begin{figure}[ht]
    \centerline{\includegraphics[width=\columnwidth, height=0.4\textheight]{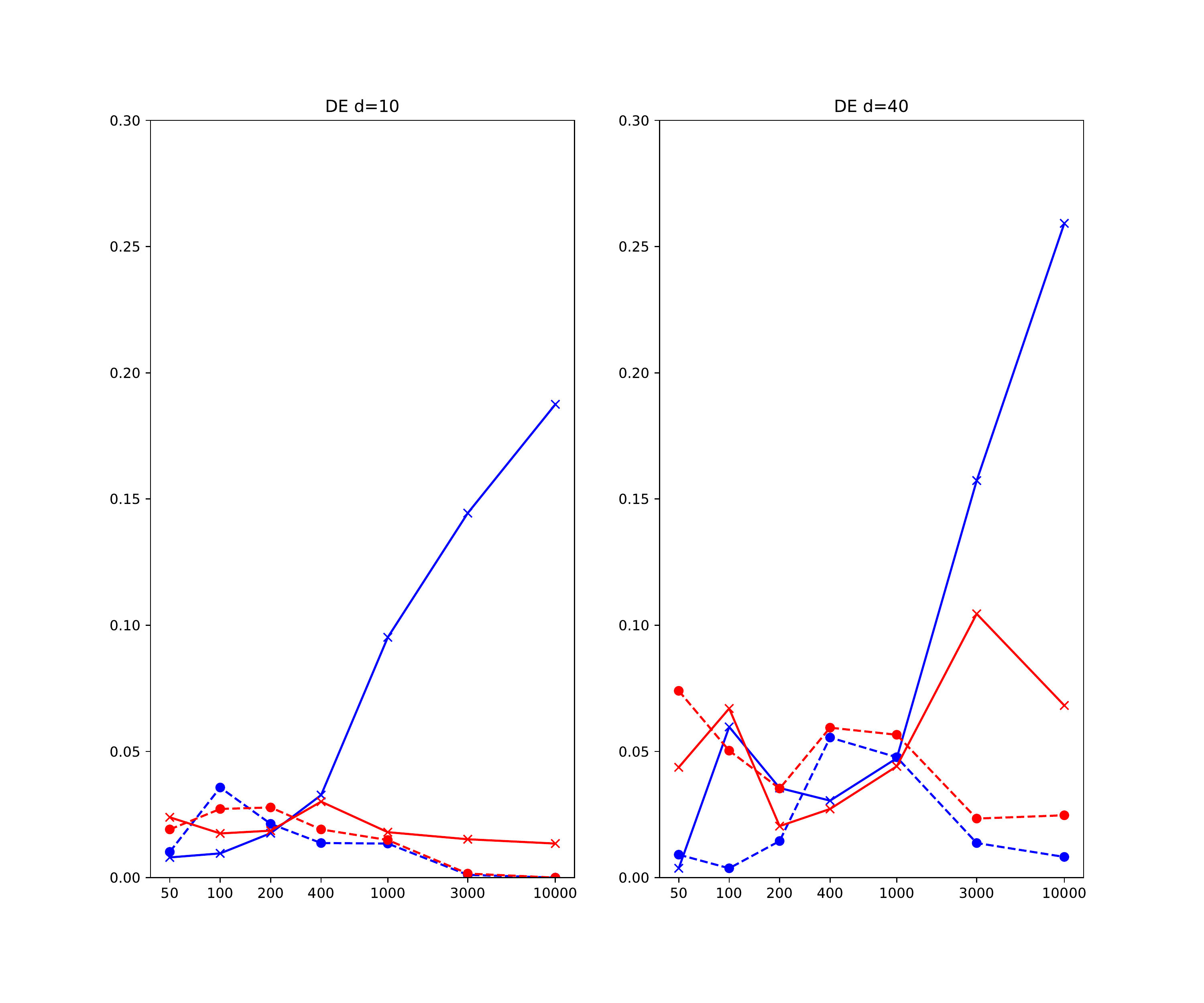}}
    \caption{Plots of relative error  of $DE$ relative to $DE$ and $mDE$ (dashed blue curve), and relative error
    of $mDE$ relative to $DE$ and $mDE$ (solid blue curve). Plots of relative error of $DE$ relative to $DE$ and $hmDE$
    (dashed red curves), and relative error of $hmDE$ relative to $DE$ and $hmDE$ (solid red curve).}
    \label{fig:margin_s7}
\end{figure}

\end{document}